\newtheorem{theorem}{Theorem}[section]
\newtheorem{lemma}[theorem]{Lemma}
\newtheorem{corollary}[theorem]{Corollary}
\newtheorem{proposition}[theorem]{Proposition}
\theoremstyle{definition}
\newtheorem{definition}[theorem]{Definition}
\newtheorem{example}[theorem]{Example}
\newtheorem{remark}[theorem]{Remark}
\newtheorem{notation}[theorem]{Notation}
\numberwithin{equation}{theorem}
\def\rad{\operatorname{rad}}
\def\link{\operatorname{link}}
\def\Cdot{\check C^\bullet}
\def\funcF{\operatorname{F}}
\def\funcG{\operatorname{G}}
\def\Ass{\operatorname{Ass}}
\def\Hom{\operatorname{Hom}}
\def\Ext{\operatorname{Ext}}
\def\Tor{\operatorname{Tor}}
\def\bsf{{\boldsymbol{f}}}
\def\bsg{{\boldsymbol{g}}}
\def\bsu{{\boldsymbol{u}}}
\def\bsx{{\boldsymbol{x}}}
\def\bzero{{\boldsymbol{0}}}
\def\fraka{\mathfrak{a}}
\def\frakb{\mathfrak{b}}
\def\frakm{\mathfrak{m}}
\def\frakn{\mathfrak{n}}
\def\NN{\mathbb{N}}
\def\PP{\mathbb{P}}
\def\QQ{\mathbb{Q}}
\def\RR{\mathbb{R}}
\def\ZZ{\mathbb{Z}}
\def\calO{\mathcal O}
\def\ge{\geqslant}
\def\le{\leqslant}
\def\phi{\varphi}
\def\bar{\overline}
\def\tilde{\widetilde}
\def\dlim{\varinjlim}
\def\to{\longrightarrow}
\def\mapsto{\longmapsto}
\renewcommand{\mod}{\,\operatorname{mod}\,}
\begin{document}
\title{Bockstein homomorphisms in local cohomology} 

\author{Anurag K. Singh}
\address{Department of Mathematics, University of Utah, 155 South 1400 East, Salt Lake City, UT~84112, USA} \email{singh@math.utah.edu}

\author{Uli Walther}
\address{Department of Mathematics, Purdue University, 150 N. University Street, West Lafayette, IN~47907, USA} \email{walther@math.purdue.edu}
\thanks{A.K.S.~was supported by NSF grants DMS~0600819 and DMS~0608691.}
\thanks{U.W.~was supported by NSF grant DMS~0555319 and by NSA grant H98230-06-1-0012.}

\date{\today}

\subjclass[2000]{Primary 13D45; Secondary 13F20, 13F55.}

\begin{abstract}
Let $R$ be a polynomial ring in finitely many variables over the integers, and fix an ideal $\fraka$ of $R$. We prove that for all but finitely prime integers $p$, the Bockstein homomorphisms on local cohomology, $H^k_\fraka(R/pR)\to H^{k+1}_\fraka(R/pR)$, are zero. This provides strong evidence for Lyubeznik's conjecture which states that the modules $H^k_\fraka(R)$ have a finite number of associated prime ideals.
\end{abstract}
\maketitle

\section{Introduction}

Let $R$ be a polynomial ring in finitely many variables over $\ZZ$, the ring of integers. Fix an ideal $\fraka$ of $R$. For each prime integer $p$, applying the local cohomology functor $H^\bullet_\fraka(-)$ to
\[
\CD
0@>>>R/pR@>p>>R/p^2R@>>>R/pR@>>>0\,,
\endCD
\]
one obtains a long exact sequence; the connecting homomorphisms in this sequence are the \emph{Bockstein homomorphisms} for local cohomology,
\[
\beta^k_p\colon H^k_\fraka(R/pR)\to H^{k+1}_\fraka(R/pR)\,.
\]
We prove that for all but finitely many prime integers $p$, the Bockstein homomorphisms $\beta^k_p$ are zero, Theorem~\ref{thm:main}. 

Our study here is motivated by Lyubeznik's conjecture \cite[Remark~3.7]{Lyubeznik:Invent} which states that for regular rings $R$, each local cohomology module $H^k_\fraka(R)$ has finitely many associated prime ideals. This conjecture has been verified for regular rings of positive characteristic by Huneke and Sharp \cite{HS:TAMS}, and for regular local rings of characteristic zero as well as unramified regular local rings of mixed characteristic by Lyubeznik \cite{Lyubeznik:Invent,Lyubeznik:Comm}. It remains unresolved for polynomial rings over $\ZZ$, where it implies that for fixed $\fraka\subseteq R$, the Bockstein homomorphisms $\beta^k_p$ are zero for almost all prime integers $p$; Theorem~\ref{thm:main} provides strong supporting evidence for Lyubeznik's conjecture.

The situation is quite different when, instead of regular rings, one considers hypersurfaces. In Example~\ref{example:hypersurface} we present a hypersurface $R$ over $\ZZ$, with ideal $\fraka$, such that the Bockstein homomorphism $H^2_\fraka(R/pR)\to H^3_\fraka(R/pR)$ is nonzero for each prime integer $p$.

Huneke \cite[Problem~4]{Huneke:Sundance} asked whether local cohomology modules of Noetherian rings have finitely many associated prime ideals. The answer to this is negative: in \cite{Singh:MRL} the first author constructed an example where, for $R$ a hypersurface, $H^3_\fraka(R)$ has $p$-torsion elements for each prime integer $p$, and hence has infinitely many associated primes; see also Example~\ref{example:hypersurface}. The issue of $p$-torsion is central in studying Lyubeznik's conjecture for finitely generated algebras over $\ZZ$, and the Bockstein homomorphism is a first step towards understanding $p$-torsion.

For local or graded rings $R$, the first examples of local cohomology modules $H^k_\fraka(R)$ with infinitely many associated primes were produced by Katzman \cite{Katzman}; these are not integral domains. Subsequently, Singh and Swanson \cite{SS:IMRN} constructed families of graded hypersurfaces $R$ over arbitrary fields, for which a local cohomology module $H^k_\fraka(R)$ has infinitely many associated primes; these hypersurfaces are unique factorization domains that have rational singularities in the characteristic zero case, and are $F$-regular in the case of positive characteristic.

In Section~\ref{sec:preliminary} we establish some properties of Bockstein homomorphisms that are used in Section~\ref{sec:main} in the proof of the main result, Theorem~\ref{thm:main}. Section~\ref{sec:examples} contains various examples, and Section~\ref{sec:SR} is devoted to Stanley-Reisner rings: for $\Delta$ a simplicial complex, we relate Bockstein homomorphisms on reduced simplicial cohomology groups $\tilde H^\bullet(\Delta,\ZZ/p\ZZ)$ and Bockstein homomorphisms on local cohomology modules $H^\bullet_\fraka(R/pR)$, where $\fraka$ is the Stanley-Reisner ideal of $\Delta$. We use this to construct nonzero Bockstein homomorphisms on $H^\bullet_\fraka(R/pR)$, for $R$ a polynomial ring over $\ZZ$.

\section{Bockstein homomorphisms}
\label{sec:preliminary}

\begin{definition}
Let $R$ be a commutative Noetherian ring, and $M$ an $R$-module. Let $p$ be an element of $R$ that is a nonzerodivisor on $M$.

Let $\funcF^\bullet$ be an $R$-linear covariant $\delta$-functor on the category of $R$-modules. The exact sequence
\[
\CD
0@>>>M@>p>>M@>>>M/pM@>>>0
\endCD
\]
then induces an exact sequence
\[
\CD
\funcF^k(M/pM)@>{\delta^k_p}>>\funcF^{k+1}(M)@>p>>\funcF^{k+1}(M)@>{\pi^{k+1}_p}>>\funcF^{k+1}(M/pM)\,.
\endCD
\]
The \emph{Bockstein homomorphism} $\beta^k_p$ is the composition
\[
\pi^{k+1}_p\circ\delta^k_p\colon\funcF^k(M/pM)\to\funcF^{k+1}(M/pM)\,.
\]

It is an elementary verification that $\beta^\bullet_p$ agrees with the connecting homomorphisms in the cohomology exact sequence obtained by applying $\funcF^\bullet$ to the exact sequence 
\[
\CD
0@>>>M/pM@>p>>M/p^2M@>>>M/pM@>>>0\,.
\endCD
\]
\end{definition}

Let $\fraka$ be an ideal of $R$, generated by elements $f_1,\dots,f_t$. The covariant $\delta$-functors of interest to us are local cohomology $H^\bullet_\fraka(-)$ and Koszul cohomology $H^\bullet(f_1,\dots,f_t;-)$ discussed next.

Setting $\bsf^e=f_1^e,\dots,f_t^e$, the Koszul cohomology modules $H^\bullet(\bsf^e;M)$ are the cohomology modules of the Koszul complex $K^\bullet(\bsf^e;M)$. For each $e\ge 1$, one has a map of complexes
\[
K^\bullet(\bsf^e;M)\to K^\bullet(\bsf^{e+1};M)\,,
\]
and thus a filtered direct system $\left\{K^\bullet(\bsf^e;M)\right\}_{e\ge1}$. The direct limit of this system can be identified with the \v Cech complex $\Cdot(\bsf;M)$ displayed below:
\[
0\to M\to\bigoplus_iM_{f_i}\to\bigoplus_{i<j}M_{f_if_j}\to\cdots\to M_{f_1 \cdots f_t}\to 0\,.
\]
The local cohomology modules $H^\bullet_\fraka(M)$ may be computed as the cohomology modules of $\Cdot(\bsf;M)$, or equivalently as direct limits of the Koszul cohomology modules $H^\bullet(\bsf^e;M)$. There is an isomorphism of functors
\[
H^k_\fraka(-)\ \cong\ \dlim_e H^k(\bsf^e;-)\,,
\]
and each element of $H^k_\fraka(M)$ lifts to an element of $H^k(\bsf^e;M)$ for $e\gg 0$.
 
\begin{remark}
\label{rem:natural-transformation} 
Let $\funcF^\bullet$ and $\funcG^\bullet$ be covariant $\delta$-functors on the category of $R$-modules, and let $\tau\colon\funcF^\bullet\to\funcG^\bullet$ be a natural transformation. Since the Bockstein homomorphism is defined as the composition of a connecting homomorphism and reduction mod $p$, one has a commutative diagram
\[
\CD
\funcF^k(M/pM)@>>>\funcF^{k+1}(M/pM)\\
@V{\tau}VV @VV{\tau}V\\
\funcG^k(M/pM)@>>>\funcG^{k+1}(M/pM)\,,
\endCD
\]
where the horizontal maps are the respective Bockstein homomorphisms. The natural transformations of interest to us are
\[
H^\bullet(\bsf^e;-)\to H^\bullet(\bsf^{e+1};-)
\] 
and
\[
H^\bullet(\bsf^e;-)\to H^\bullet_\fraka(-)\,,
\]
where $\fraka=(f_1,\dots,f_t)$.
\end{remark}

Let $M$ be an $R$-module, let $p$ be an element of $R$ that is a nonzerodivisor on $M$, and let $\bsf=f_1,\dots,f_t$ and $\bsg=g_1,\dots,g_t$ be elements of $R$ such that $f_i\equiv g_i\mod p$ for each $i$. One then has isomorphisms
\[
H^\bullet(\bsf;M/pM)\cong H^\bullet(\bsg;M/pM)\,,
\]
though the Bockstein homomorphisms on $H^\bullet(\bsf;M/pM)$ and $H^\bullet(\bsg;M/pM)$ may not respect these isomorphisms; see Example~\ref{example:different-lifts}. A key point in the proof of Theorem~\ref{thm:main} is Lemma~\ref{lemma:different-lifts}, which states that upon passing to the direct limits $\dlim_eH^\bullet(\bsf^e;M/pM)$ and $\dlim_eH^\bullet(\bsg^e;M/pM)$, the Bockstein homomorphisms commute with the isomorphisms
\[
\dlim_eH^\bullet(\bsf^e;M/pM)\cong\dlim_eH^\bullet(\bsg^e;M/pM)\,.
\]

\begin{example}
\label{example:different-lifts}
Let $p$ be a nonzerodivisor on $R$. Let $x$ be an element of $R$. The Bockstein homomorphism on Koszul cohomology $H^\bullet(x;R/pR)$ is
\begin{align*}
(0:_{R/pR}x)=H^0(x;R/pR)&\ \to\ H^1(x;R/pR)=R/(p,x)R\\
r\mod(p) &\ \mapsto\ rx/p\mod (p,x)\,.
\end{align*}
Let $y$ be an element of $R$ with $x\equiv y\mod p$. Comparing the Bockstein homomorphisms $\beta,\beta'$ on $H^\bullet(x;R/pR)$ and $H^\bullet(y;R/pR)$ respectively, one sees that the diagram
\[
\CD
\left(0:_{R/pR}x\right)@>\beta>>R/(p,x)R\\
@|@|\\
\left(0:_{R/pR}y\right)@>\beta'>>R/(p,y)R
\endCD
\]
does not commute if $rx/p$ and $ry/p$ differ modulo the ideal $(p,y)R$; for an explicit example, take $R=\ZZ[w,x,z]/(wx-pz)$ and $y=x+p$. Then $\beta(\bar{w})=\bar{z}$, whereas $\beta'(\bar{w})=\bar{z}+\bar{w}$.

Nonetheless, the diagram below does commute, hinting at Lemma~\ref{lemma:different-lifts}.
\[
\CD
H^0(x;R/pR)@>\beta>>H^1(x;R/pR)@>x>>H^1(x^2;R/pR)\\
@| @. @|\\
H^0(y;R/pR)@>\beta'>>H^1(y;R/pR)@>y>>H^1(y^2;R/pR)
\endCD
\]
\end{example}

\begin{lemma}
\label{lemma:different-lifts}
Let $M$ be an $R$-module, and let $p$ be an element of $R$ that is $M$-regular. Suppose $\fraka$ and $\frakb$ are ideals of $R$ with $\rad(\fraka+pR)=\rad(\frakb+pR)$. Then there exists a commutative diagram 
\[
\CD 
\cdots@>>>H^k_\fraka(M/pM)@>>>H^{k+1}_\fraka(M/pM)@>>>\cdots\\
@. @VVV @VVV\\
\cdots@>>>H^k_\frakb(M/pM)@>>>H^{k+1}_\frakb(M/pM)@>>>\cdots
\endCD
\]
where the horizontal maps are the respective Bockstein homomorphisms, and the vertical maps are natural isomorphisms.
\end{lemma}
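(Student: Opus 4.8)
The plan is to reduce everything to the description of the Bockstein as a connecting homomorphism, as recorded in the Definition. Since $p$ is $M$-regular, the sequence of $R$-modules annihilated by $p^2$
\[
\CD
0@>>>M/pM@>p>>M/p^2M@>>>M/pM@>>>0
\endCD
\]
is exact, and $\beta^k$ on $H^\bullet_\fraka(M/pM)$ is precisely the connecting homomorphism obtained by applying the $\delta$-functor $H^\bullet_\fraka(-)$ to it; the same holds for $\frakb$. It therefore suffices to produce a natural isomorphism of $\delta$-functors $H^\bullet_\fraka(-)\cong H^\bullet_\frakb(-)$ on the category of $R$-modules annihilated by $p^2$: applying the resulting morphism of long exact sequences to the displayed sequence then yields the commutative diagram of the statement, the vertical maps being the component isomorphisms at $M/pM$.

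I would construct this isomorphism using the \v Cech description of local cohomology, in three steps. First, if $N$ is annihilated by $p^2$ then $N_p=0$, so for a generating set $\bsf$ of $\fraka$ the \v Cech complexes satisfy $\Cdot(\bsf,p;N)=\Cdot(\bsf;N)$ (every additional summand is a localization at a multiple of $p$, hence zero); as these complexes compute $H^\bullet_{\fraka+pR}(N)$ and $H^\bullet_\fraka(N)$, and as localization is exact so that a short exact sequence of $p^2$-torsion modules gives a short exact sequence of \v Cech complexes whose snake-lemma connecting maps realize the $\delta$-structure, this identifies $H^\bullet_\fraka(-)$ with $H^\bullet_{\fraka+pR}(-)$ as $\delta$-functors on $p^2$-torsion modules, and likewise $H^\bullet_\frakb(-)$ with $H^\bullet_{\frakb+pR}(-)$. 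Second, the torsion functor $\Gamma_J$ depends only on $\rad J$, so by hypothesis $\Gamma_{\fraka+pR}=\Gamma_{\rad(\fraka+pR)}=\Gamma_{\rad(\frakb+pR)}=\Gamma_{\frakb+pR}$ as functors on all $R$-modules; passing to right derived functors gives $H^\bullet_{\fraka+pR}(-)=H^\bullet_{\frakb+pR}(-)$ as $\delta$-functors on all $R$-modules. Third, composing the three identifications yields the required isomorphism of $\delta$-functors $H^\bullet_\fraka(-)\cong H^\bullet_\frakb(-)$ on $p^2$-torsion modules.

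The one step that demands care is the bookkeeping of $\delta$-functoriality: each identification must be an isomorphism of $\delta$-functors and not merely an objectwise isomorphism, since it is exactly compatibility with the connecting maps that forces the squares involving the Bockstein homomorphisms to commute — and indeed Example~\ref{example:different-lifts} shows that on a single \v Cech or Koszul complex over a fixed generating set the naive objectwise comparison already fails. Passing to the form of the Bockstein coming from $M/p^2M$ is what makes the argument go through cleanly: it places every module occurring in the category of $p^2$-torsion modules, on which the \v Cech complex of $\fraka$ coincides with that of $\fraka+pR$, so that the hypothesis actually used is $\rad(\fraka+pR)=\rad(\frakb+pR)$ rather than any relation between $\fraka$ and $\frakb$ directly.
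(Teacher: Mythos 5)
Your argument is correct, and it is genuinely different from the paper's. You replace the Bockstein by the connecting homomorphism of $0\to M/pM\to M/p^2M\to M/pM\to 0$, note that every module in sight is killed by $p^2$ and hence localizes to zero at $p$, so that for such modules the \v Cech complex on $\bsf,p$ is literally equal to the one on $\bsf$ (the extra summands vanish); this identifies $H^\bullet_\fraka(-)$ with $H^\bullet_{\fraka+pR}(-)$ compatibly with connecting maps, and the hypothesis $\rad(\fraka+pR)=\rad(\frakb+pR)$ then finishes the proof since local cohomology with its $\delta$-structure depends only on the radical. The paper instead keeps the Bockstein in its original form $\pi\circ\delta$ coming from $0\to M\to M\to M/pM\to 0$ (where $M$ itself is not $p$-power torsion, so your reduction is unavailable), reduces to the case $\fraka=\frakb+yR$ with $y\in\rad(\frakb+pR)$, and invokes the functorial localization long exact sequence relating $H^\bullet_\fraka$, $H^\bullet_\frakb$ and $H^\bullet_\frakb(-)_y$; the key observation there is that $H^\bullet_\frakb(M/pM)$ is $(\frakb+pR)$-torsion, hence $y$-torsion, so its localization at $y$ vanishes and the comparison maps $\theta^k$ are isomorphisms. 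Your route is shorter, avoids the one-generator-at-a-time reduction, and makes transparent why the hypothesis concerns $\fraka+pR$ rather than $\fraka$; it does rely on the agreement of the two incarnations of the Bockstein, which the paper asserts (as an elementary verification) in its Definition, so you are entitled to it. Two small points of hygiene: the modules annihilated by $p^2$ are not closed under extensions, so your ``isomorphism of $\delta$-functors on that category'' should be read as the statement that for every short exact sequence all of whose terms are killed by $p^2$ the identifications of \v Cech complexes intertwine the connecting maps --- which is all you use, and is immediate because the identification is an equality of complexes; and one should remember (as the paper does implicitly throughout) that the \v Cech-complex connecting maps agree with the $\delta$-structure of $H^\bullet_\fraka$.
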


\begin{proof}
It suffices to consider the case $\fraka=\frakb+yR$, where $y\in\rad(\frakb+pR)$. For each $R$-module $N$, one has an exact sequence 
\[
\CD
@>>>H^{k-1}_\frakb(N)_y@>>>H^k_\fraka(N)@>>>H^k_\frakb(N)@>>>H^k_\frakb(N)_y@>>>
\endCD
\]
which is functorial in $N$; see for example \cite[Exercise~14.4]{magnumopus}. Using this for
\[
\CD
0@>>>M@>p>>M@>>>M/pM@>>>0\,,
\endCD
\]
one obtains the commutative diagram below, with exact rows and columns.
\[
\CD
H^{k-1}_\frakb(M/pM)_y\!\!@>>>\!\!H^k_\frakb(M)_y@>p>>H^k_\frakb(M)_y\!\!@>>>\!\!H^k_\frakb(M/pM)_y\\
@VVV @VVV @VVV @VVV\\
H^k_\fraka(M/pM)\!\!@>>>\!\!H^{k+1}_\fraka(M)@>p>>H^{k+1}_\fraka(M)\!\!@>>>\!\!H^{k+1}_\fraka(M/pM)\\
@V\theta^kVV @VVV @VVV @VV\theta^{k+1}V\\
H^k_\frakb(M/pM)\!\!@>>>\!\!H^{k+1}_\frakb(M)@>p>>H^{k+1}_\frakb(M)\!\!@>>>\!\!H^{k+1}_\frakb(M/pM)\\
@VVV @VVV @VVV @VVV\\
H^k_\frakb(M/pM)_y\!\!@>>>\!\!H^{k+1}_\frakb(M)_y@>p>>H^{k+1}_\frakb(M)_y\!\!@>>>\!\!H^{k+1}_\frakb(M/pM)_y
\endCD
\]

\medskip\noindent Since $H^\bullet_\frakb(M/pM)$ is $y$-torsion, it follows that $H^\bullet_\frakb(M/pM)_y=0$. Hence the maps $\theta^\bullet$ are isomorphisms, and the desired result follows.
\end{proof}

\section{Main Theorem}
\label{sec:main}

\begin{theorem}
\label{thm:main}
Let $R$ be a polynomial ring in finitely many variables over the ring of integers. Let $\fraka=(f_1,\dots,f_t)$ be an ideal of $R$. 

If a prime integer $p$ is a nonzerodivisor on Koszul cohomology $H^{k+1}(\bsf;R)$, then the Bockstein homomorphism $H^k_\fraka(R/pR)\to H^{k+1}_\fraka(R/pR)$ is zero.

In particular, the Bockstein homomorphisms on $H^\bullet_\fraka(R/pR)$ are zero for all but finitely many prime integers $p$.
\end{theorem}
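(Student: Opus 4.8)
The plan is to deduce the final (``in particular'') assertion from the first one, and to prove the first by comparing Bockstein homomorphisms on the Koszul cohomology of the ideals $\bsf^e=(f_1^e,\dots,f_t^e)$ as $e$ grows. For the deduction: each $H^j(\bsf;R)$ is a finitely generated $R$-module, and $H^j(\bsf;R)=0$ unless $0\le j\le t$, so $\bigcup_j\Ass_R H^j(\bsf;R)$ is a finite set of primes. A prime integer $p$ fails to be a nonzerodivisor on some $H^{k+1}(\bsf;R)$ precisely when $pR$ lies in one of these finitely many primes, and no prime of $R$ contains two distinct prime integers (their $\gcd$ is $1$); hence all but finitely many $p$ are nonzerodivisors on $H^1(\bsf;R),\dots,H^{t+1}(\bsf;R)$ at once, and for those the first assertion yields $\beta^k_p=0$ for every $k$.

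For the first assertion, use $H^\bullet_\fraka(-)\cong\dlim_e H^\bullet(\bsf^e;-)$, together with Remark~\ref{rem:natural-transformation} applied to the natural transformations $H^\bullet(\bsf^e;-)\to H^\bullet(\bsf^{e+1};-)$ and $H^\bullet(\bsf^e;-)\to H^\bullet_\fraka(-)$: the Bockstein $\beta^k_p$ on $H^\bullet_\fraka(R/pR)$ is the direct limit of the Koszul Bockstein homomorphisms $\beta^k_{p,e}$ on $H^\bullet(\bsf^e;R/pR)$. The long exact Koszul sequence of $0\to R\xrightarrow{p}R\to R/pR\to 0$ identifies $\operatorname{image}(\beta^k_{p,e})$ with the reduction modulo $p$ of the torsion submodule $H^{k+1}(\bsf^e;R)[p]=\{\,\omega : p\omega=0\,\}$. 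So it is enough to show that, for each $e$, the image of $H^{k+1}(\bsf^e;R)[p]$ in $H^{k+1}_\fraka(R)$ is contained in $pH^{k+1}_\fraka(R)$; equivalently, $H^{k+1}_\fraka(R)[p]\subseteq pH^{k+1}_\fraka(R)$.

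The hypothesis enters at level $e=1$. Each map of complexes $(R\xrightarrow{f_i^e}R)\to(R\xrightarrow{f_i^{e+1}}R)$ has a wrong-way partner $(R\xrightarrow{f_i^{e+1}}R)\to(R\xrightarrow{f_i^e}R)$ whose two composites are multiplication by $f_i$; tensoring over $i$, the power transition $\sigma\colon K^\bullet(\bsf;R)\to K^\bullet(\bsf^e;R)$ acquires a wrong-way chain map $\rho$ with $\rho\sigma=\sigma\rho=(f_1\cdots f_t)^{e-1}$, both being natural in the coefficients and so inducing natural transformations of $\delta$-functors. As $p$ is a nonzerodivisor on $H^{k+1}(\bsf;R)$, the connecting map $H^k(\bsf;R/pR)\to H^{k+1}(\bsf;R)$ vanishes, hence $\beta^k_{p,1}=0$. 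Remark~\ref{rem:natural-transformation}, applied to the transformation $\Psi\colon H^\bullet(\bsf^e;-)\to H^\bullet(\bsf;-)$ induced by $\rho$, gives $\Psi\circ\beta^k_{p,e}=\beta^k_{p,1}\circ\Psi=0$, so $\operatorname{image}(\beta^k_{p,e})\subseteq\ker\Psi$; composing with $\sigma$ and using $\sigma\Psi=(f_1\cdots f_t)^{e-1}$ shows $(f_1\cdots f_t)^{e-1}$ annihilates $\operatorname{image}(\beta^k_{p,e})$. Equivalently: if $\xi\in H^k_\fraka(R/pR)$ lifts to $H^k(\bsf^e;R/pR)$, then $(f_1\cdots f_t)^{e-1}\xi$ lifts to $H^k(\bsf^e;R)$, so $\beta^k_p\bigl((f_1\cdots f_t)^{e-1}\xi\bigr)=0$ and thus $(f_1\cdots f_t)^{e-1}\beta^k_p(\xi)=0$.

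The remaining difficulty — the technical heart of the proof — is to upgrade this level-by-level annihilator bound to the statement $H^{k+1}_\fraka(R)[p]\subseteq pH^{k+1}_\fraka(R)$. Since $f_1\cdots f_t\in\fraka$ acts locally nilpotently on $H^{k+1}_\fraka(R/pR)$, the bound does not on its own give $\beta^k_p=0$, so a finiteness input is needed. As the conclusion is local one may localize at a prime $\mathfrak{p}$ of $R$: when $p\notin\mathfrak{p}$ the Bockstein vanishes trivially, and when $p\in\mathfrak{p}$ the ring $R_\mathfrak{p}$ is an unramified regular local ring of mixed characteristic, for which $H^{k+1}_\fraka(R_\mathfrak{p})$ has only finitely many associated primes; the task becomes to show that, under the standing hypothesis, $p$ avoids all of them, i.e.\ $H^{k+1}_\fraka(R_\mathfrak{p})[p]=0$. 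Making the finitely many associated primes interact with the annihilator bounds of the previous paragraph is where the real work lies; Lemma~\ref{lemma:different-lifts} is used throughout to keep the direct-limit bookkeeping consistent, as it guarantees that replacing $\fraka$ (or the lifts of the $f_i$ modulo $p$) by anything with the same radical modulo $p$ leaves the Bockstein on $H^\bullet_\fraka(R/pR)$ unchanged.
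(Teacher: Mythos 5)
Your reduction of the ``in particular'' clause to the main assertion is correct and matches the paper, and your reformulation of the vanishing of $\beta^k_p$ as $H^{k+1}_\fraka(R)[p]\subseteq p\,H^{k+1}_\fraka(R)$ is accurate. But the core of the argument is missing, as you yourself acknowledge. The transfer maps $\rho$ with $\rho\sigma=\sigma\rho=(f_1\cdots f_t)^{e-1}$ only show that $(f_1\cdots f_t)^{e-1}$ annihilates the image of $\beta^k_{p,e}$, and since every element of $H^{k+1}_\fraka(R/pR)$ is already killed by a power of $f_1\cdots f_t$, this bound carries no information in the direct limit. Worse, the fallback you sketch --- localize at $\mathfrak{p}\ni p$, invoke Lyubeznik's finiteness theorem for the unramified regular local ring $R_{\mathfrak{p}}$, and show $H^{k+1}_\fraka(R_{\mathfrak{p}})[p]=0$ --- aims at a strictly stronger statement than the theorem: it amounts to the vanishing of the connecting map $\delta\colon H^k_\fraka(R/pR)\to H^{k+1}_\fraka(R)$ rather than of the composite $\pi\circ\delta$. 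Example~\ref{example:elliptic} shows that the authors do not know how to prove $\delta=0$ for almost all $p$ even in concrete cases; only the composite is accessible. There is also no visible mechanism by which the hypothesis on the level-one Koszul cohomology $H^{k+1}(\bsf;R)$ would control the associated primes of $H^{k+1}_\fraka(R_{\mathfrak{p}})$.

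The idea you are missing is the paper's substitution trick. Let $\psi$ be the endomorphism of $R=\ZZ[x_1,\dots,x_n]$ with $\psi(x_i)=x_i^p$. Since $\psi$ is flat, applying $R^{\psi^e}\otimes_R(-)$ to the injection $p\colon H^{k+1}(\bsf;R)\to H^{k+1}(\bsf;R)$ shows that $p$ is a nonzerodivisor on $H^{k+1}(\psi^e(\bsf);R)$ for every $e$; this is exactly the transport of the hypothesis to higher levels that your power-of-$\bsf$ system lacks, because $p$ being a nonzerodivisor on $H^{k+1}(\bsf;R)$ does not imply the same for $H^{k+1}(\bsf^e;R)$. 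Hence the Bockstein on $H^k(\psi^e(\bsf);R/pR)$ vanishes for every $e$. Since $\psi^e(f_i)\equiv f_i^{p^e}\pmod{p}$, the ideals $(\psi^e(\bsf))R/pR$ are cofinal with $\fraka^eR/pR$, so $H^k_\fraka(R/pR)\cong\dlim_e H^k(\psi^e(\bsf);R/pR)$, and Remark~\ref{rem:natural-transformation} together with Lemma~\ref{lemma:different-lifts} passes the vanishing to the limit: every $\eta\in H^k_\fraka(R/pR)$ lifts to some $H^k(\psi^e(\bsf);R/pR)$, where its Bockstein image is already zero.
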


We use the following notation in the proof, and also later in Section~\ref{sec:SR}.

\begin{notation}
\label{notation}
Let $R$ be a ring with an endomorphism $\phi$. Set $R^\phi$ to be the $R$-bimodule with $R$ as its underlying Abelian group, the usual action of $R$ on the left, and the right $R$-action defined by $r'r=\phi(r)r'$ for $r\in R$ and $r'\in R^\phi$. One thus obtains a functor
\[
M\mapsto R^\phi\otimes_R M
\]
on the category of $R$-modules, where $R^\phi\otimes_RM$ is viewed as an $R$-module via the left $R$-module structure on $R^\phi$.
\end{notation}

\begin{proof}[Proof of Theorem~\ref{thm:main}]
The $R$-modules $H^k(\bsf;R)$ are finitely generated, so
\[
\bigcup_k\Ass H^k(\bsf;R)
\]
is a finite set of prime ideals. These finitely many prime ideals contain finitely many prime integers, so the latter assertion follows from the former.

Fix a prime $p$ that is a nonzerodivisor on $H^{k+1}(\bsf;R)$. Suppose $R=\ZZ[x_1,\dots,x_n]$, set $\psi$ to be the endomorphism of $R$ with $\psi(x_i)=x_i^p$ for each~$i$. For each positive integer $e$, consider $R^{\psi^e}$ as in Notation~\ref{notation}. The module $R^{\psi^e}$ is $R$-flat, so applying $R^{\psi^e}\otimes_R(-)$ to the injective homomorphism
\[
\CD
H^{k+1}(\bsf;R)@>p>>H^{k+1}(\bsf;R)
\endCD
\]
one obtains an injective homomorphism 
\[
\CD
H^{k+1}(\psi^e(\bsf);R)@>p>>H^{k+1}(\psi^e(\bsf);R)\,,
\endCD
\]
where $\psi^e(\bsf)=\psi^e(f_1),\dots,\psi^e(f_t)$. Thus, the connecting homomorphism in the exact sequence
\[
\CD
H^k(\psi^e(\bsf);R/pR)@>\delta>>H^{k+1}(\psi^e(\bsf);R)@>p>>H^{k+1}(\psi^e(\bsf);R)
\endCD
\]
is zero, and hence so is the Bockstein homomorphism
\begin{equation}
\label{eqn:zeromap}
\CD
H^k(\psi^e(\bsf);R/pR)@>>>H^{k+1}(\psi^e(\bsf);R/pR)\,.
\endCD
\end{equation}

The families of ideals
\[
\big\{\big(\psi^e(\bsf)\big)R/pR\big\}_{e\ge1}\qquad\text{ and }\qquad\big\{\fraka^eR/pR\big\}_{e\ge1}
\]
are cofinal, so 
\[
H^k_\fraka(R/pR)\cong\dlim_e H^k(\psi^e(\bsf);R/pR)\,.
\]
Let $\eta$ be an element of $H^k_\fraka(R/pR)$. There exists an integer $e$ and an element $\tilde{\eta}\in H^k(\psi^e(\bsf);R/pR)$ such that $\tilde{\eta}\mapsto\eta$. By Remark~\ref{rem:natural-transformation} and Lemma~\ref{lemma:different-lifts}, one has a commutative diagram
\[
\CD
H^k(\psi^e(\bsf);R/pR)@>>>H^{k+1}(\psi^e(\bsf);R/pR)\\
@VVV @VVV \\
H^k_\fraka(R/pR)@>>>H^{k+1}_\fraka(R/pR)\,,
\endCD
\]
where the map in the upper row is zero by \eqref{eqn:zeromap}. It follows that $\eta$ maps to zero in $H^{k+1}_\fraka(R/pR)$.
\end{proof}

\section{Examples}
\label{sec:examples}

Example~\ref{example:ramified} shows that the Bockstein $\beta^0_p\colon H^0_\fraka(R/pR)\to H^1_\fraka(R/pR)$ need not be zero for $R$ a regular ring. In Example~\ref{example:hypersurface} we exhibit a hypersurface $R$ over $\ZZ$, with ideal $\fraka$, such that $\beta^2_p\colon H^2_\fraka(R/pR)\to H^3_\fraka(R/pR)$ is nonzero for each prime integer $p$. Example~\ref{example:elliptic} is based on elliptic curves, and includes an intriguing open question.

\begin{example}
\label{example:ramified}
Let $\fraka=(f_1,\dots,f_t)\subseteq R$ and let $[r]\in H^0_\fraka(R/pR)$. There exists an integer $n$ and $a_i\in R$ such that $rf_i^n=pa_i$ for each $i$. Using the \v Cech complex on $\bsf$ to compute $H^\bullet_\fraka(R/pR)$, one has
\[
\beta^0_p([r])=\big[\big(a_1/f_1^n,\dots,a_t/f_t^n\big)\big]\in H^1_\fraka(R/pR)\,.
\]

For an example where $\beta^0_p$ is nonzero, take $R=\ZZ[x,y]/(xy-p)$ and $\fraka=xR$. Then $[y]\in H_{xR}^0(R/pR)$, and
\[
\beta^0_p([y])=[1/x]\in H_{xR}^1(R/pR)\,.
\]
We remark that $R$ is a regular ring: since $R_x=\ZZ[x,1/x]$ and $R_y=\ZZ[y,1/y]$ are regular, it suffices to observe that the local ring $R_{(x,y)}$ is also regular. Note, however, that $R$ is \emph{ramified} since $R_{(x,y)}$ is a ramified regular local ring. 
\end{example}

\begin{example}
\label{example:hypersurface}
We give an example where $\beta^2_p$ is nonzero for each prime integer $p$; this is based on \cite[Section~4]{Singh:MRL} and \cite{Singh:Guanajuato}. Consider the hypersurface
\[
R=\ZZ[u,v,w,x,y,z]/(ux+vy+wz)
\]
and ideal $\fraka=(x,y,z)R$. Let $p$ be an arbitrary prime integer. Then the element $(u/yz,-v/xz,w/xy)\in R_{yz}\oplus R_{xz}\oplus R_{xy}$ gives a cohomology class
\[
\eta=[(u/yz,-v/xz,w/xy)]\ \in\ H_\fraka^2(R/pR)\,.
\]
It is easily seen that $\beta^2_p(\eta)=0$; we verify below that $\beta^2_p(F(\eta))$ is nonzero, where $F$ denotes the Frobenius action on $H_\fraka^2(R/pR)$. Indeed, if
\[
\beta^2\big(F(\eta)\big)=\left[\frac{(ux)^p+(vy)^p+(wz)^p}{p(xyz)^p}\right]
\]
is zero, then there exists $k\in\NN$ and elements $c_i\in R/pR$ such that
\begin{equation}
\label{eqn:hypersurface}
\frac{(ux)^p+(vy)^p+(wz)^p}{p}(xyz)^{k}=c_1x^{p+k}+c_2y^{p+k}+c_3z^{p+k}
\end{equation}
in $R/pR$. Assign weights as follows:
\begin{align*}
x\colon (1,0,0,0), \qquad\qquad\qquad & u\colon (-1,0,0,1), \\
y\colon (0,1,0,0), \qquad\qquad\qquad & v\colon (0,-1,0,1), \\
z\colon (0,0,1,0), \qquad\qquad\qquad & w\colon (0,0,-1,1). 
\end{align*}
There is no loss of generality in taking the elements $c_i$ to be homogeneous, in which case $\deg c_1=(-p,k,k,p)$, so $c_1$ must be a scalar multiple of $u^py^kz^k$. Similarly, $c_2$ is a scalar multiple of $v^pz^kx^k$ and $c_3$ of $w^px^ky^k$. Hence
\[
\frac{(ux)^p+(vy)^p+(wz)^p}{p}(xyz)^k\ \in\ (xyz)^k\big(u^px^p,\ v^py^p,\ w^pz^p\big)R/pR\,.
\]
Canceling $(xyz)^k$ and specializing $u\mapsto 1, v\mapsto 1,w \mapsto 1$, we have
\[
\frac{x^p+y^p+(-x-y)^p}{p}\ \in\ \big(x^p,y^p\big)\ZZ/p\ZZ[x,y]\,,
\]
which is easily seen to be false.
\end{example}

\begin{example}
\label{example:elliptic}
Let $E\subset\PP^2_\QQ$ be a smooth elliptic curve. Set $R=\ZZ[x_0,\dots,x_5]$ and let $\fraka\subset R$ be an ideal such that $(R/\fraka)\otimes_\ZZ\QQ$ is the homogeneous coordinate ring of $E\times\PP^1_\QQ$ for the Segre embedding $E\times\PP^1_\QQ\subset\PP^5_\QQ$. For all but finitely many prime integers $p$, the reduction mod $p$ of $E$ is a smooth elliptic curve $E_p$, and $R/(\fraka+pR)$ is a homogeneous coordinate ring for $E_p\times\PP^1_{\ZZ/p}$; we restrict our attention to such primes.

The elliptic curve $E_p$ is said to be \emph{ordinary} if the Frobenius map
\[
H^1(E_p,\calO_{E_p})\to H^1(E_p,\calO_{E_p}) 
\]
is injective, and is \emph{supersingular} otherwise. By well-known results on elliptic curves \cite{Deuring, Elkies}, there exist infinitely many primes $p$ for which $E_p$ is ordinary, and infinitely many for which $E_p$ is supersingular. The local cohomology module $H^4_\fraka(R/pR)$ is zero if $E_p$ is supersingular, and nonzero if $E_p$ is ordinary, see \cite[page~75]{HS}, \cite[page~219]{Lyubeznik:Compositio}, \cite[Corollary~2.2]{SW:Contemp}, or \cite[Section~22.1]{magnumopus}. It follows that the multiplication by $p$ map
\[
\CD
H^4_\fraka(R)@>p>>H^4_\fraka(R)
\endCD
\]
is surjective for infinitely many primes $p$, and not surjective for infinitely many $p$. Lyubeznik's conjecture implies that this map is injective for almost all primes $p$, equivalently that the connecting homomorphism
\[
\CD
H^3_\fraka(R/pR)@>{\delta}>>H^4_\fraka(R)
\endCD
\]
is zero for almost all $p$. We do not know if this is true. However, Theorem~\ref{thm:main} implies that $\beta^3_p$, i.e., the composition of the maps
\[
\CD
H^3_\fraka(R/pR)@>{\delta}>>H^4_\fraka(R)@>\pi>>H^4_\fraka(R/pR)\,,
\endCD
\]
is zero for almost all $p$. It is known that the ideal $\fraka R/pR$ is generated up to radical by four elements, \cite[Theorem~1.1]{SW:Contemp}, and that it has height $3$. Hence $\beta^k_p=0$ for $k\neq 3$.
\end{example}

\section{Stanley-Reisner rings}
\label{sec:SR}

Bockstein homomorphisms originated in algebraic topology where they were used, for example, to compute the cohomology rings of lens spaces. In this section, we work in the context of simplicial complexes and associated Stanley-Reisner ideals, and relate Bockstein homomorphisms on simplicial cohomology groups to those on local cohomology modules; see Theorem~\ref{thm:top-alg}. We use this to construct nonzero Bockstein homomorphisms $H^k_\fraka(R/pR)\to H^{k+1}_\fraka(R/pR)$, for $R$ a polynomial ring over $\ZZ$.

\begin{definition}
Let $\Delta$ be a simplicial complex with vertices $1,\dots,n$. Set $R$ to be the polynomial ring $\ZZ[x_1,\dots,x_n]$. The \emph{Stanley-Reisner ideal} of $\Delta$~is
\[
\fraka=(\bsx^\sigma\mid \sigma\not\in \Delta)R\,,
\]
i.e., $\fraka$ is the ideal generated by monomials $\bsx^\sigma=\prod_{i=1}^n x_i^{\sigma_i}$ such that $\sigma$ is not a face of $\Delta$. In particular, if $\{i\}\notin\Delta$, then $x_i\in\fraka$.

The ring $R/\fraka$ is the \emph{Stanley-Reisner ring} of $\Delta$.
\end{definition}

\begin{example}
\label{example:rp2a}
Consider the simplicial complex corresponding to a triangulation of the real projective plane $\RR\PP^2$ depicted in Figure~\ref{fig:rp2}.
\begin{figure}[htbp]
\[
\psfrag{1}{$1$}
\psfrag{2}{$2$}
\psfrag{3}{$3$}
\psfrag{4}{$4$}
\psfrag{5}{$5$}
\psfrag{6}{$6$}
\includegraphics[width=5cm]{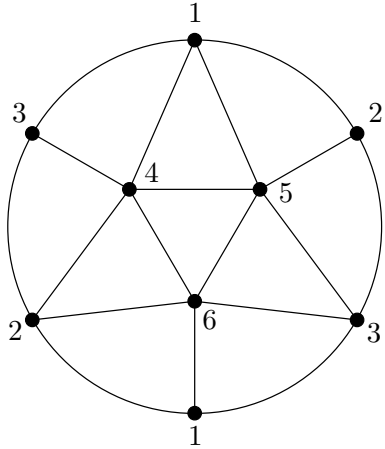}
\]
\caption{A triangulation of the real projective plane}
\label{fig:rp2}
\end{figure}

The associated Stanley-Reisner ideal in $\ZZ[x_1,\dots,x_6]$ is generated by\begin{multline*}
x_1x_2x_3\,,\ \ x_1x_2x_4\,,\ \ x_1x_3x_5\,,\ \ x_1x_4x_6\,,\ \ x_1x_5x_6\,,\ \ x_2x_3x_6\,,\\
x_2x_4x_5\,,\ \ x_2x_5x_6\,,\ \ x_3x_4x_5\,,\ \ x_3x_4x_6\,.
\end{multline*}
\end{example}

\begin{remark}
\label{rem:Hochster}
Let $\Delta$ be a simplicial complex with vertex set $\{1,\dots,n\}$. Let $\fraka$ be the associated Stanley-Reisner ideal in $R=\ZZ[x_1,\dots,x_n]$, and set $\frakn$ to be the ideal $(x_1,\dots,x_n)$. The ring $R$ has a $\ZZ^n$-grading where $\deg x_i$ is the $i$-th unit vector; this induces a grading on the ring $R/\fraka$, and also on the \v Cech complex $\Cdot=\Cdot(\bsx;R/\fraka)$. Note that a module
\[
(R/\fraka)_{x_{i_1}\cdots x_{i_k}}
\]
is nonzero precisely if $x_{i_1}\cdots x_{i_k}\notin\fraka$, equivalently $\{i_1,\dots,i_k\}\in\Delta$. Hence ${[\Cdot]}_\bzero$, the $(0,\dots,0)$-th graded component of $\Cdot$, is the complex that computes the reduced simplicial cohomology $\tilde{H}^\bullet(\Delta;\ZZ)$, with the indices shifted by one. This provides natural identifications
\begin{equation}
\label{eqn:alg-top-zero}
\left[H^k_\frakn(R/\fraka)\right]_{\bzero}=\tilde{H}^{k-1}(\Delta;\ZZ)\qquad\text{ for }k\ge0\,.
\end{equation}
Similarly, for $p$ a prime integer, one has
\[
\left[H^k_\frakn(R/(\fraka+pR))\right]_{\bzero}=\tilde{H}^{k-1}(\Delta;\ZZ/p\ZZ)\,,
\]
and an identification of Bockstein homomorphisms
\[
\CD
\tilde{H}^{k-1}(\Delta;\ZZ/p\ZZ)@>\beta>>\tilde{H}^k(\Delta;\ZZ/p\ZZ)\\
@| @|\\
\left[H^k_\frakn(R/(\fraka+pR))\right]_{\bzero}@>\beta>>\left[H^{k+1}_\frakn(R/(\fraka+pR))\right]_{\bzero}\,.
\endCD
\]
Proposition~\ref{prop:Hochster} extends these natural identifications.
\end{remark}

\begin{definition}
Let $\Delta$ be a simplicial complex, and let $\tau$ be a subset of its vertex set. The \emph{link} of $\tau$ in $\Delta$ is the set
\[
\link_\Delta(\tau)=\{\sigma\in\Delta\mid\sigma\cap\tau=\emptyset\ \text{ and }\ \sigma\cup\tau\in\Delta\}\,.
\]
\end{definition}

\begin{proposition}
\label{prop:Hochster}
Let $\Delta$ be a simplicial complex with vertex set $\{1,\dots,n\}$, and let $\fraka$ in $R=\ZZ[x_1,\dots,x_n]$ be the associated Stanley-Reisner ideal.

Let $G$ be an Abelian group. Given $\bsu\in\ZZ^n$, set $\tilde{\bsu}=\{i\mid u_i<0\}$. Then
\[
{H^k_\frakn(R/\fraka\otimes_{\ZZ}G)}_\bsu=
\begin{cases}
\tilde{H}^{k-1-|\tilde{\bsu}|}(\link_{\Delta}(\tilde{\bsu});G) &\text{ if $\bsu\le\bzero$}\,,\\
0 &\text{ if $u_j>0$ for some $j$\,,}
\end{cases}
\]
where $\frakn=(x_1,\dots,x_n)$. Moreover, for $\bsu\le\bzero$, there is a natural identification of Bockstein homomorphisms
\[
\CD
\left[H^k_\frakn(R/(\fraka+pR))\right]_{\bsu}@>\beta>>\left[H^{k+1}_\frakn(R/(\fraka+pR))\right]_{\bsu}\\
@| @|\\
\tilde{H}^{k-1-|\tilde{\bsu}|}(\link_{\Delta}(\tilde{\bsu});\ZZ/p\ZZ)@>\beta>>\tilde{H}^{k-|\tilde{\bsu}|}(\link_{\Delta}(\tilde{\bsu});\ZZ/p\ZZ)\,.
\endCD
\]
\end{proposition}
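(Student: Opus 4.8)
The plan is to compute all the modules $H^\bullet_\frakn(R/\fraka\otimes_\ZZ G)$ at once, via the $\ZZ^n$-graded \v Cech complex $\Cdot=\Cdot(\bsx;R/\fraka\otimes_\ZZ G)$, reading off each graded strand $[\Cdot]_\bsu$ separately; since $H^\bullet_\frakn(-)$ respects the $\ZZ^n$-grading, this determines $[H^k_\frakn(R/\fraka\otimes_\ZZ G)]_\bsu$ for every $\bsu\in\ZZ^n$. This runs parallel to Hochster's classical computation over a field, but I would carry it out at the level of cochain complexes, so the argument never uses that the coefficients form a field, only that $R/\fraka$ is a free $\ZZ$-module with basis the monomials supported on faces of $\Delta$. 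The first ingredient is monomial bookkeeping: writing $\tilde\bsu=\{i\mid u_i<0\}$ and $\bsu_+=\{j\mid u_j>0\}$, one checks directly that $[(R/\fraka\otimes_\ZZ G)_{\bsx^\sigma}]_\bsu$ is a copy of $G$ precisely when $\sigma\in\Delta$, $\tilde\bsu\subseteq\sigma$, and $\sigma\cup\bsu_+\in\Delta$, and is $0$ otherwise; indeed the degree-$\bsu$ elements of the localization are represented by $\bsx^{\bsu+N\mathbf{1}_\sigma}$ for $N\gg0$, which forces $u_i\ge0$ off $\sigma$ (i.e.\ $\tilde\bsu\subseteq\sigma$) and forces the support $\sigma\cup\bsu_+$ to be a face. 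Hence $[\Cdot]_\bsu$ is, up to the standard shift of cohomological degree by one, a complex whose degree-$k$ term is a direct sum of copies of $G$ indexed by the faces $\sigma\in\Delta$ with $|\sigma|=k$, $\tilde\bsu\subseteq\sigma$, $\sigma\cup\bsu_+\in\Delta$, equipped with the \v Cech differential, whose signs agree with simplicial coboundary signs up to rescaling basis vectors by signs.

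Now two cases. If $\bsu_+\ne\emptyset$, fix $j\in\bsu_+$ and observe that the set of faces contributing to $[\Cdot]_\bsu$ is stable under $\sigma\mapsto\sigma\triangle\{j\}$, since the conditions $\tilde\bsu\subseteq\sigma$ and $\sigma\cup\bsu_+\in\Delta$ are unaffected, using $j\notin\tilde\bsu$ and $j\in\bsu_+$; this cone-over-$j$ structure furnishes a contracting homotopy, so $[\Cdot]_\bsu$ is acyclic and $[H^k_\frakn(R/\fraka\otimes_\ZZ G)]_\bsu=0$ for all $k$, which is the $u_j>0$ case. If instead $\bsu\le\bzero$, so $\bsu_+=\emptyset$, the condition $\sigma\cup\bsu_+\in\Delta$ is automatic, and every contributing face is uniquely $\sigma=\tilde\bsu\sqcup\rho$ with $\rho\cap\tilde\bsu=\emptyset$ and $\tilde\bsu\cup\rho\in\Delta$, i.e.\ $\rho\in\link_\Delta(\tilde\bsu)$. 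Reindexing along $\sigma\leftrightarrow\rho$ identifies $[\Cdot]_\bsu$ with the reduced simplicial cochain complex of $\link_\Delta(\tilde\bsu)$ with coefficients in $G$, shifted in cohomological degree by $|\tilde\bsu|$, giving $[H^k_\frakn(R/\fraka\otimes_\ZZ G)]_\bsu\cong\tilde H^{k-1-|\tilde\bsu|}(\link_\Delta(\tilde\bsu);G)$. I would check the degenerate cases — $\tilde\bsu\notin\Delta$ (both sides vanish), $\tilde\bsu$ a facet, and the $(-1)$-dimensional term — to confirm that the answer is genuinely \emph{reduced} cohomology.

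For the Bockstein statement, the key point is that since $R/\fraka$ is $\ZZ$-free, $[\Cdot(\bsx;R/\fraka)]_\bsu$ is a complex of free $\ZZ$-modules and $[\Cdot(\bsx;R/\fraka\otimes_\ZZ G)]_\bsu=[\Cdot(\bsx;R/\fraka)]_\bsu\otimes_\ZZ G$, so all the identifications above are natural in $G$. Applying them to the short exact sequence $0\to\ZZ/p\ZZ\to\ZZ/p^2\ZZ\to\ZZ/p\ZZ\to0$, and using $R/\fraka\otimes_\ZZ\ZZ/p\ZZ=R/(\fraka+pR)$, the Bockstein on $[H^\bullet_\frakn(R/(\fraka+pR))]_\bsu$ is by definition the connecting homomorphism of the cohomology of the resulting short exact sequence of complexes, which after the reindexing above is literally the short exact sequence of complexes defining the Bockstein on $\tilde H^\bullet(\link_\Delta(\tilde\bsu);\ZZ/p\ZZ)$. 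Hence the two Bocksteins are identified, as claimed.

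I expect the main obstacle to be the bookkeeping in the first step: pinning down $[(R/\fraka\otimes_\ZZ G)_{\bsx^\sigma}]_\bsu$ correctly for arbitrary $\bsu$ (not merely squarefree negative ones), matching \v Cech degrees to simplicial degrees with the correct off-by-one and the $|\tilde\bsu|$ shift, and handling the degenerate links so that \emph{reduced} cohomology genuinely appears (including the vanishing assertion, which one must see is compatible with reduced cohomology of a link of $\tilde\bsu$ rather than merely with some term-by-term vanishing). By contrast, the acyclicity argument when $\bsu_+\ne\emptyset$, and the naturality-in-$G$ argument that yields the Bockstein compatibility, are essentially formal once the cochain-level picture is in place.
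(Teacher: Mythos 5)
Your proposal is correct and follows essentially the same route as the paper: compute each $\ZZ^n$-graded strand of the \v Cech complex $\Cdot(\bsx;R/\fraka\otimes_\ZZ G)$, observe that for $u_j>0$ the strand is (the cochain complex of) a cone over $j$ and hence acyclic, identify the strand for $\bsu\le\bzero$ with the reduced cochain complex of $\link_\Delta(\tilde\bsu)$ after a shift by $|\tilde\bsu|+1$, and deduce the Bockstein compatibility from the fact that the two short exact sequences of complexes coincide under this identification. Your version is somewhat more explicit (the condition $\sigma\cup\bsu_+\in\Delta$ and the contracting homotopy $\sigma\mapsto\sigma\triangle\{j\}$), but there is no substantive difference.
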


This essentially follows from Hochster \cite{Hochster}, though we sketch a proof next; see also \cite[Section 5.3]{BH}. Note that $\tilde{\bzero}=\emptyset$ and $\link_{\Delta}(\emptyset)=\Delta$, so one recovers \eqref{eqn:alg-top-zero} by setting $\bsu=\bzero$ and $G=\ZZ$.

\begin{proof}
We may assume $G$ is nontrivial; set $T=R/\fraka\otimes_\ZZ G$. As in Remark~\ref{rem:Hochster}, we compute $H^k_\frakn(T)$ as the cohomology of the \v Cech complex $\Cdot=\Cdot(\bsx;T)$.

We first consider the case where $u_j>0$ for some $j$. If the module
\[
\left[T_{x_{i_1}\cdots x_{i_k}}\right]_\bsu
\]
is nonzero, then $\bar{x}_j\neq0$ in $T_{x_{i_1}\cdots x_{i_k}}$, and so $\{j,i_1,\dots,i_k\}\in\Delta$. Hence, if the complex $[\Cdot]_\bsu$ is nonzero, then it computes---up to index shift---the reduced simplicial cohomology of a cone, with $j$ the cone vertex. It follows that $[H^k_\frakn(T)]_\bsu=0$ for each $k$.

Next, suppose $\bsu\le\bzero$. Then the module $[T_{x_{i_1}\cdots x_{i_k}}]_\bsu$ is nonzero precisely if $\{i_1,\dots,i_k\}\in\Delta$ and $\tilde{\bsu}\subseteq\{i_1,\dots,i_k\}$. Hence, after an index shift of $|\tilde{\bsu}|+1$, the complex ${[\Cdot]}_\bsu$ agrees with a complex $C^\bullet(\link_{\Delta}(\tilde{\bsu});G)$ that computes the reduced simplicial cohomology groups $\tilde{H}^\bullet(\link_{\Delta}(\tilde{\bsu});G)$.

The assertion about Bockstein maps now follows, since the complexes
\[
\minCDarrowwidth15pt
\CD
0@>>>[\Cdot(\bsx;R/\fraka)]_\bsu@>p>>[\Cdot(\bsx;R/\fraka)]_\bsu@>>>[\Cdot(\bsx;R/(\fraka+pR))]_\bsu@>>>0
\endCD
\]
and
\[
\minCDarrowwidth15pt
\CD
0@>>>C^\bullet(\link_{\Delta}(\tilde{\bsu});\ZZ)@>p>>C^\bullet(\link_{\Delta}(\tilde{\bsu});\ZZ)@>>>C^\bullet(\link_{\Delta}(\tilde{\bsu});\ZZ/p)@>>>0
\endCD
\]
agree after an index shift.
\end{proof}

Thus far, we have related Bockstein homomorphisms on reduced simplicial cohomology groups to those on $H^\bullet_\frakn(R/(\fraka+pR))$. Our interest, however, is in the Bockstein homomorphisms on $H^\bullet_\fraka(R/pR)$. Towards this, we need the following duality result:

\begin{proposition}
\label{proposition:duality}
Let $(S,\frakm)$ be a Gorenstein local ring. Set $d=\dim S$, and let~$(-)^\vee$ denote the functor $\Hom_S(-,E)$, where $E$ is the injective hull of~$S/\frakm$. Suppose $p\in S$ is a nonzerodivisor on $S$ as well as a nonzerodivisor on a finitely generated $S$-module $M$. Then there are natural isomorphisms
\begin{small}
\[
\minCDarrowwidth12pt
\CD
{\Ext^{k+1}_S(M,S/pS)}^\vee@>>>{\Ext^{k+1}_S(M,S)}^\vee@>p>>{\Ext^{k+1}_S(M,S)}^\vee@>>>{\Ext^k_S(M,S/pS)}^\vee\\
@VV\cong V @VV\cong V @VV\cong V @VV\cong V\\
H^{d-k-2}_\frakm(M/pM)@>>>H^{d-k-1}_\frakm(M)@>p>>H^{d-k-1}_\frakm(M)@>>>H^{d-k-1}_\frakm(M/pM)\,,
\endCD
\]
\end{small}
where the top row originates from applying ${\Hom_S(M,-)}^\vee$ to the sequence
\[
\CD
0@>>>S@>p>>S@>>>S/pS@>>>0\,,
\endCD
\]
and the bottom row from applying $H^0_\frakm(-)$ to the sequence
\[
\CD
0@>>>M@>p>>M@>>>M/pM@>>>0\,.
\endCD
\]
\end{proposition}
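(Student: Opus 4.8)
The plan is to derive Proposition~\ref{proposition:duality} from local duality over the Gorenstein ring $S$, whose canonical (dualizing) module is $S$ itself, placed in cohomological degree $d=\dim S$. We may as well assume $S$ is complete, since for finitely generated modules $H^i_\frakm(-)$, $\Ext^j_S(-,S)$ and the functor $(-)^\vee$ all commute suitably with $\frakm$-adic completion; this reduces us to Grothendieck local duality. Recall that local duality gives a natural isomorphism of cohomological ($\delta$-)functors $H^i_\frakm(N)\cong\Ext^{d-i}_S(N,S)^\vee$ on finitely generated $S$-modules $N$, where ``$\delta$-functor'' records the compatibility with the connecting homomorphisms of short exact sequences.

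First I would apply this isomorphism to $0\to M\xrightarrow{p}M\to M/pM\to0$ (a sequence of finitely generated modules). It identifies the bottom row of the proposition, term by term, with the Matlis dual of the long exact sequence obtained by applying $\Hom_S(-,S)$ to the same short exact sequence: one has $H^{d-k-1}_\frakm(M)\cong\Ext^{k+1}_S(M,S)^\vee$, $H^{d-k-1}_\frakm(M/pM)\cong\Ext^{k+1}_S(M/pM,S)^\vee$, $H^{d-k-2}_\frakm(M/pM)\cong\Ext^{k+2}_S(M/pM,S)^\vee$, and multiplication by $p$ corresponds to multiplication by $p$ throughout. On the other side, the top row of the proposition is by construction the Matlis dual of the $\Hom_S(M,-)$-long exact sequence of $0\to S\xrightarrow{p}S\to S/pS\to0$. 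So everything reduces to producing an isomorphism between these two long exact sequences that restricts to the identity on the common $\Ext^\bullet_S(M,S)$ terms.

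The bridge is the natural isomorphism $\Ext^{j+1}_S(M/pM,S)\cong\Ext^{j}_S(M,S/pS)$ for $j\ge0$, valid because $p$ is a nonzerodivisor on both $S$ and $M$. Indeed, since $p$ is $M$-regular, $F_\bullet\otimes_S(S/pS)$ is an $(S/pS)$-free resolution of $M/pM$ for any $S$-free resolution $F_\bullet\to M$, so by Hom--tensor adjunction $\Ext^{j}_{S/pS}(M/pM,S/pS)\cong\Ext^{j}_S(M,S/pS)$; and since $p$ is $S$-regular, Rees's change-of-rings theorem gives $\Ext^{j}_{S/pS}(M/pM,S/pS)\cong\Ext^{j+1}_S(M/pM,S)$. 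Feeding this into the output of the first step assembles the asserted commutative ladder, and naturality of all the maps involved gives the claimed naturality in $M$.

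The main obstacle is verifying that this bridge isomorphism is compatible with the connecting homomorphisms of the two long exact sequences, so that one obtains a genuinely commuting ladder rather than a mere term-by-term isomorphism. I would handle this by fixing one free resolution $F_\bullet\to M$ and realizing all the relevant complexes---$\Hom_S(F_\bullet,S)$, its reduction $\Hom_S(F_\bullet,S/pS)$, and the mapping cone of $\Hom_S(F_\bullet,S)\xrightarrow{p}\Hom_S(F_\bullet,S)$---as explicit subquotients of one another, so that both families of connecting maps are visibly the Bockstein maps attached to multiplication by $p$ on $\Hom_S(F_\bullet,S)$; equivalently, one may run the whole argument in the derived category, where both exact triangles come from $R\Hom_S(M,S)\otimes_S^{\mathbf{L}}\bigl(S\xrightarrow{p}S\bigr)$ and local duality is a single natural isomorphism. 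Once this compatibility is settled, the remaining checks---that Matlis duality turns multiplication by $p$ into multiplication by $p$, and that the vertical maps are isomorphisms---are routine.
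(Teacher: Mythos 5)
Your proposal is correct, but it is organized rather differently from the paper's proof. You invoke Grothendieck local duality $H^i_\frakm(N)\cong\Ext^{d-i}_S(N,S)^\vee$ as a $\delta$-functorial black box, identify the bottom row with the Matlis dual of the $\Ext^\bullet_S(-,S)$ long exact sequence of $0\to M\xrightarrow{\ p\ }M\to M/pM\to 0$, and then bridge to the top row via $\Ext^{j+1}_S(M/pM,S)\cong\Ext^j_{S/pS}(M/pM,S/pS)\cong\Ext^j_S(M,S/pS)$ (Rees plus the resolution $F_\bullet/pF_\bullet$ of $M/pM$). The paper instead proves the needed duality from scratch: with $F_\bullet$ a free resolution of $M$, it Matlis-dualizes the entire short exact sequence of complexes $\Hom_S(F_\bullet,\,S\xrightarrow{\ p\ }S\to S/pS)$ into $0\to F_\bullet\otimes_S E_p\to F_\bullet\otimes_S E\xrightarrow{\ p\ }F_\bullet\otimes_S E\to 0$ with $E_p=\Hom_S(S/pS,E)$, and computes the homology of the terms as $\Tor^S_k(M,E)=H^{d-k}_\frakm(M)$ and $\Tor^{S/pS}_k(M/pM,E_p)=H^{d-1-k}_\frakm(M/pM)$, using the \v Cech complex as a flat resolution of $E$ over $S$, respectively of $E_p$ over the $(d-1)$-dimensional Gorenstein ring $S/pS$. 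The payoff of that arrangement is that both rows of the proposition are literally the homology long exact sequence of one and the same short exact sequence of complexes, so the compatibility of connecting homomorphisms --- which you rightly single out as the main obstacle of your route and still have to settle by the mapping-cone/derived-category argument you sketch --- simply never arises. Your route does work: the identification of both triangles with $R\Hom_S(M,S)\otimes^{\mathbf{L}}_S(S\xrightarrow{\ p\ }S)$ is valid because $p$ is regular on both $S$ and $M$, and the reduction to the complete case is harmless since local cohomology, $\Ext$ against finitely generated modules, and $(-)^\vee$ all behave as you claim under completion; the trade-off is simply that your version leans on more external machinery and leaves its one delicate step as an outline, whereas the paper's is self-contained and makes that step automatic.
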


\begin{proof}
Let $F_\bullet$ be a free resolution of $M$. The top row of the commutative diagram in the proposition is the homology exact sequence of
\begin{small}
\[
\CD
0@<<<\Hom_S(F_\bullet,S)^\vee@<p<<\Hom_S(F_\bullet,S)^\vee@<<<\Hom_S(F_\bullet,S/pS)^\vee@<<<0\\
@. @| @| @| \\
0@<<<F_\bullet\otimes_SE@<p<<F_\bullet\otimes_SE@<<<F_\bullet\otimes_SE_p@<<<0
\endCD
\]
\end{small}
where $E_p=\Hom_S(S/pS,E)$.

Let $\Cdot$ be the \v Cech complex on a system of parameters of~$S$. Since $S$ is Gorenstein, $\Cdot$ is a flat resolution of $H^d(\Cdot)=E$, and therefore
\begin{equation}
\label{eqn:flat}
H_k(F_\bullet\otimes_SE)=\Tor^S_k(M,E)=H_k(M\otimes_S\Cdot)=H^{d-k}_\frakm(M)\,.
\end{equation}

Since $p$ is $M$-regular, the complex $F_\bullet/pF_\bullet$ is a resolution of $M/pM$ by free $S/pS$-modules. Hence
\[
F_\bullet\otimes_SE_p=F_\bullet\otimes_S(S/pS)\otimes_{S/pS}E_p=(F_\bullet/pF_\bullet)\otimes_{S/pS}E_p\,.
\]
Repeating the proof of~\eqref{eqn:flat} over the Gorenstein ring $S/pS$, which has dimension $d-1$, we see that
\[
H_k(F_\bullet\otimes_SE_p)=H^{d-1-k}_\frakm(M/pM)\,.\qedhere
\]
\end{proof}

\begin{remark}
\label{rem:union}
Let $R=\ZZ[x_1,\dots,x_n]$ be a polynomial ring. Fix an integer $t\ge2$, and set $\phi$ to be the endomorphism of $R$ with $\phi(x_i)=x_i^t$ for each~$i$; note that $\phi$ is flat. Consider $R^\phi$ as in Notation~\ref{notation}; the functor $\Phi$ with
\[
\Phi(M)=R^\phi\otimes_R M
\]
is an exact functor $\Phi$ on the category of $R$-modules. There is an isomorphism $\Phi(R)\cong R$ given by $r'\otimes r\mapsto\phi(r)r'$. More generally, for $M$ a free $R$-module, one has $\Phi(M)\cong M$. For a map $\alpha$ of free modules given by a matrix $(\alpha_{ij})$, the map $\Phi(\alpha)$ is given by the matrix $(\phi(\alpha_{ij}))$. Since $\Phi$ takes finite free resolutions to finite free resolutions, it follows that for $R$-modules $M$ and $N$, one has natural isomorphisms
\[
\Phi\big(\Ext^k_R(M,N)\big)\cong\Ext^k_R\big(\Phi(M),\Phi(N)\big)\,,
\]
see \cite[\S\,2]{Lyubeznik:Compositio} or \cite[Remark~2.6]{SW:IJM}.

Let $\fraka$ be an ideal generated by square-free monomials. Since $\phi(\fraka)\subseteq\fraka$, there is an induced endomorphism $\bar{\phi}$ of $R/\fraka$. The image of $\bar{\phi}$ is spanned by those monomials in $x_1^t,\dots,x_d^t$ that are not in $\fraka$. Using the map that is the identity on these monomials, and kills the rest, one obtains a splitting of $\bar{\phi}$. It follows that the endomorphism $\bar{\phi}\colon R/\fraka\to R/\fraka$ is pure.

Since the family of ideals $\{\phi^e(\fraka)R\}$ is cofinal with the family $\{\fraka^e\}$, the module $H^k_\fraka(R)$ is the direct limit of the system
\[
\Ext^k_R(R/\fraka,R)\to\Phi\big(\Ext^k_R(R/\fraka,R)\big)\to\Phi^2\big(\Ext^k_R(R/\fraka,R)\big)\to\cdots\,.
\]
The maps above are injective; see \cite[Theorem~1]{Lyubeznik:monomial}, \cite[Theorem~1.1]{Mustata}, or \cite[Theorem~1.3]{SW:IJM}. Similarly, one has injective maps in the system
\[
\dlim_e\Phi^e\big(\Ext^k_R(R/\fraka,R/pR)\big)\cong H^k_\fraka(R/pR)\,,
\]
and hence a commutative diagram with injective rows and exact columns:
\begin{small}
\[
\minCDarrowwidth20pt
\CD
\Ext^k_R(R/\fraka,R/pR)@>>>\Phi\big(\Ext^k_R(R/\fraka,R/pR)\big)@>>>\cdots@>>>H^k_\fraka(R/pR)\\
@VVV @VVV @. @VVV\\
\Ext^{k+1}_R(R/\fraka,R)@>>>\Phi\big(\Ext^{k+1}_R(R/\fraka,R)\big)@>>>\cdots@>>>H^{k+1}_\fraka(R)\\
@VpVV @VpVV @. @VVpV\\
\Ext^{k+1}_R(R/\fraka,R)@>>>\Phi\big(\Ext^{k+1}_R(R/\fraka,R)\big)@>>>\cdots@>>>H^{k+1}_\fraka(R)\\
@VVV @VVV @. @VVV\\
\Ext^{k+1}_R(R/\fraka,R/pR)@>>>\Phi\big(\Ext^{k+1}_R(R/\fraka,R/pR)\big)@>>>\cdots@>>>H^{k+1}_\fraka(R/pR)
\endCD
\]
\end{small}

It follows that the vanishing of the Bockstein homomorphism
\[
H^k_\fraka(R/pR)\to H^{k+1}_\fraka(R/pR)
\]
is equivalent to the vanishing of the Bockstein homomorphism
\[
\Ext^k_R(R/\fraka,R/pR)\to\Ext^{k+1}_R(R/\fraka,R/pR)\,.
\]
\end{remark}

\begin{theorem}
\label{thm:top-alg}
Let $\Delta$ be a simplicial complex with vertices $1,\dots,n$. Set $R=\ZZ[x_1,\dots,x_n]$, and let $\fraka\subseteq R$ be the Stanley-Reisner ideal of~$\Delta$. For each prime integer $p$, the following are equivalent:
\begin{enumerate}
\item the Bockstein $H^k_\fraka(R/pR)\to H^{k+1}_\fraka(R/pR)$ is zero;

\item the Bockstein homomorphism
\[
\tilde{H}^{n-k-2-|\tilde{\bsu}|}(\link_{\Delta}(\tilde{\bsu});\ZZ/p\ZZ)\to\tilde{H}^{n-k-1-|\tilde{\bsu}|}(\link_{\Delta}(\tilde{\bsu});\ZZ/p\ZZ)
\]
is zero for each $\bsu\in\ZZ^n$ with $\bsu\le\bzero$.
\end{enumerate}
\end{theorem}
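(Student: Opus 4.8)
The plan is to combine three results already available: the reduction of the local-cohomology Bockstein to an $\Ext$ Bockstein (Remark~\ref{rem:union}), the local duality of Proposition~\ref{proposition:duality}, and Hochster's description of the $\ZZ^n$-graded pieces of $H^\bullet_\frakn(R/(\fraka+pR))$ via links (Proposition~\ref{prop:Hochster}). The argument is a chain of equivalences; the only labor lies in keeping track of which ideal one takes local cohomology with respect to, in passing to a local ring so as to invoke the duality, and in the cohomological indices. By Remark~\ref{rem:union}, which applies since $\fraka$ is a squarefree monomial ideal, condition~(1) is equivalent to the vanishing of the Bockstein $\Ext^k_R(R/\fraka,R/pR)\to\Ext^{k+1}_R(R/\fraka,R/pR)$.

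To apply Proposition~\ref{proposition:duality} I would localize at the maximal ideal $\frakm=(p,x_1,\dots,x_n)$: put $S=R_\frakm$, a regular, hence Gorenstein, local ring of dimension $d=n+1$, and $M=(R/\fraka)_\frakm$. Since $\fraka$ is a monomial ideal, $R/\fraka$ is $\ZZ$-torsion-free, so $p$ is a nonzerodivisor on both $S$ and $M$. Now $\Ext^j_R(R/\fraka,R/pR)$ is killed by $p$, hence is a finitely generated graded module over $\bar R=R/pR$, and $\frakm\bar R$ is the homogeneous maximal ideal of $\bar R$; consequently a graded homomorphism of such modules vanishes if and only if it vanishes after localizing at $\frakm$. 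Localizing the defining short exact sequence therefore shows that the Bockstein above vanishes if and only if the Bockstein $\Ext^k_S(M,S/pS)\to\Ext^{k+1}_S(M,S/pS)$ does. By Proposition~\ref{proposition:duality} the latter is Matlis dual to the Bockstein $H^{d-k-2}_\frakm(M/pM)\to H^{d-k-1}_\frakm(M/pM)$, and since Matlis duality over $S$ is exact and detects vanishing of finitely generated modules, the two vanish together. As $M/pM$ is $\frakm$-torsion, $H^j_\frakm(M/pM)=H^j_\frakm(R/(\fraka+pR))$, so we arrive at the Bockstein $H^{n-k-1}_\frakm(R/(\fraka+pR))\to H^{n-k}_\frakm(R/(\fraka+pR))$.

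Next I would pass from $\frakm$ to $\frakn=(x_1,\dots,x_n)$. Invoking the description of the Bockstein as the connecting homomorphism obtained by applying local cohomology to
\[
0\to R/(\fraka+pR)\to R/(\fraka+p^2R)\to R/(\fraka+pR)\to 0\,,
\]
all three modules are annihilated by $p^2$, so over $R/p^2R$ the ideals $\frakm$ and $\frakn$ have the same radical (as $p$ is nilpotent there); hence $H^\bullet_\frakm$ and $H^\bullet_\frakn$ agree on these modules, compatibly with connecting maps, and the Bockstein of the previous paragraph is the Bockstein $H^{n-k-1}_\frakn(R/(\fraka+pR))\to H^{n-k}_\frakn(R/(\fraka+pR))$. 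This is a map of $\ZZ^n$-graded modules, so it is zero if and only if its component in each degree $\bsu\in\ZZ^n$ is zero. By Proposition~\ref{prop:Hochster} these components vanish automatically unless $\bsu\le\bzero$, while for $\bsu\le\bzero$ the degree-$\bsu$ component is, under the identifications of that proposition (with $H^j_\frakn(R/(\fraka+pR))$ in degree $\bsu$ equal to $\tilde H^{j-1-|\tilde\bsu|}(\link_\Delta(\tilde\bsu);\ZZ/p\ZZ)$), the Bockstein $\tilde H^{n-k-2-|\tilde\bsu|}(\link_\Delta(\tilde\bsu);\ZZ/p\ZZ)\to\tilde H^{n-k-1-|\tilde\bsu|}(\link_\Delta(\tilde\bsu);\ZZ/p\ZZ)$. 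Stringing the equivalences together proves that (1) holds if and only if (2) does.

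I do not anticipate a single deep obstacle; the delicate part is verifying that each of the three substitutions — replacing $R$ by the local ring $S=R_\frakm$, replacing $\frakm$ by $\frakn$, and restricting to graded components — genuinely preserves the property ``the Bockstein vanishes,'' and that all the index shifts fit together: the dimension $d=n+1$ entering the duality, the reindexing $k\mapsto d-k-2=n-k-1$ it forces, and the shift by $|\tilde\bsu|+1$ built into Hochster's formula must combine to land exactly on the cohomological degrees appearing in~(2). The point needing the most attention is checking that the isomorphism of Proposition~\ref{proposition:duality} really carries the $\Ext$ Bockstein to the local-cohomology Bockstein, and not merely the underlying groups.
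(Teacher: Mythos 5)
Your argument is correct and follows essentially the same route as the paper's proof: Remark~\ref{rem:union} to reduce to the $\Ext$ Bockstein, Proposition~\ref{proposition:duality} over $R_\frakm$ with $d=n+1$, passage from $\frakm$ to $\frakn=(x_1,\dots,x_n)$, and then Proposition~\ref{prop:Hochster}; the only divergence is that the paper handles the $\frakm$-to-$\frakn$ step by citing Lemma~\ref{lemma:different-lifts}, whereas you give a direct (and equally valid) argument using that the modules in the defining sequence are killed by $p^2$. One small slip in justification: $(R/(\fraka+pR))_\frakm$ is not itself $\frakm$-torsion; the identification $H^j_{\frakm R_\frakm}(M/pM)=H^j_\frakm(R/(\fraka+pR))$ holds because the local cohomology modules are $\frakm$-torsion and hence unchanged by localization at $\frakm$ (flat base change).
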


Setting $\bsu=\bzero$ immediately yields:

\begin{corollary}
\label{cor:top-alg}
If the Bockstein homomorphism
\[
\tilde{H}^j(\Delta;\ZZ/p\ZZ)\to\tilde{H}^{j+1}(\Delta;\ZZ/p\ZZ)
\]
is nonzero, then so is the Bockstein homomorphism
\[
H^{n-j-2}_\fraka(R/pR)\to H^{n-j-1}_\fraka(R/pR)\,.
\]
\end{corollary}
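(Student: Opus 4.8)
The plan is to connect (1) and (2) by a chain of equivalences built from three results already at our disposal: Remark~\ref{rem:union}, Proposition~\ref{proposition:duality}, and Proposition~\ref{prop:Hochster}.

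First I would invoke Remark~\ref{rem:union}, which reduces (1) to the vanishing of the Bockstein homomorphism $\Ext^k_R(R/\fraka,R/pR)\to\Ext^{k+1}_R(R/\fraka,R/pR)$. Since $\fraka$ is a monomial ideal, $R/\fraka$ has a $\ZZ^n$-graded free resolution over $R$, so these $\Ext$ modules are finitely generated and $\ZZ^n$-graded over $R/pR=(\ZZ/p)[x_1,\dots,x_n]$, and the Bockstein is a homogeneous map of degree $\bzero$, being the connecting homomorphism attached to the graded exact sequence $0\to R/pR\to R/p^2R\to R/pR\to 0$. The image of a homogeneous map between finitely generated graded $R/pR$-modules is again finitely generated and graded, hence vanishes if and only if it vanishes after localizing at the homogeneous maximal ideal of $R/pR$. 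Thus (1) is equivalent to the vanishing of this Bockstein after localizing the entire diagram at the maximal ideal $\frakm=(p,x_1,\dots,x_n)$ of $R$.

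Second, I would apply Proposition~\ref{proposition:duality} to the Gorenstein local ring $S=R_\frakm$, of dimension $d=n+1$, with the nonzerodivisor $p$ and the module $M=(R/\fraka)_\frakm$; here $p$ is a nonzerodivisor on $M$ because the Stanley--Reisner ring $R/\fraka$ is a free $\ZZ$-module. As $\Ext$ and the Bockstein commute with localization, the localized Bockstein on $\Ext^\bullet_R(R/\fraka,R/pR)$ is the Bockstein on $\Ext^\bullet_S(M,S/pS)$. Applying the Matlis duality functor $(-)^\vee=\Hom_S(-,E)$ --- which is exact and, $E$ being an injective cogenerator, reflects the vanishing of maps --- Proposition~\ref{proposition:duality} identifies it with the Bockstein $H^{d-k-2}_\frakm(M/pM)\to H^{d-k-1}_\frakm(M/pM)$. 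Now $M/pM$ is $R/(\fraka+pR)$ localized at the homogeneous maximal ideal $\frakn$ of $R/pR$, and $H^\bullet_\frakn(R/(\fraka+pR))$ is supported only at $\frakn$, so this local cohomology is unchanged by that localization; feeding in $d=n+1$, we conclude that (1) holds if and only if the Bockstein $H^{n-k-1}_\frakn(R/(\fraka+pR))\to H^{n-k}_\frakn(R/(\fraka+pR))$ is zero.

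Third, I would decompose this graded Bockstein into its $\ZZ^n$-homogeneous components. By Proposition~\ref{prop:Hochster} the components in degrees $\bsu$ with some positive coordinate vanish, so only the degrees $\bsu\le\bzero$ matter; and for such $\bsu$, Proposition~\ref{prop:Hochster} identifies the degree-$\bsu$ component with the Bockstein $\tilde H^{\,n-k-2-|\tilde\bsu|}(\link_\Delta(\tilde\bsu);\ZZ/p\ZZ)\to\tilde H^{\,n-k-1-|\tilde\bsu|}(\link_\Delta(\tilde\bsu);\ZZ/p\ZZ)$. This is exactly statement (2), which closes the chain of equivalences.

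The step I expect to be the main obstacle is matching up Proposition~\ref{proposition:duality}, whose hypothesis demands a \emph{local} Gorenstein ring on which $p$ is a nonzerodivisor: $R$ itself is not local, while $R/pR$ kills $p$ and $R_\frakn$ inverts it. The device of localizing at $\frakm=(p,x_1,\dots,x_n)$ resolves this, but one must check carefully that no information is lost --- on one side because the relevant $\Ext$ modules are finitely generated and $\ZZ^n$-graded over $R/pR$, and on the other because $\frakn$ becomes the maximal ideal of $R/pR$ so that $H^\bullet_\frakn(R/(\fraka+pR))$ is already local at $\frakn$. Once this is arranged, the only remaining care is the degree arithmetic, which comes out correctly from $d=n+1$.
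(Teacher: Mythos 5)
Your proposal is correct and takes essentially the same route as the paper: it reproves the equivalence of Theorem~\ref{thm:top-alg} via Remark~\ref{rem:union}, Proposition~\ref{proposition:duality} applied to $R_\frakm$ with $\frakm=(p,x_1,\dots,x_n)$, and Proposition~\ref{prop:Hochster}, from which the corollary is the case $\bsu=\bzero$, $k=n-j-2$. The one step you compress --- replacing $H^\bullet_\frakm$ by $H^\bullet_\frakn$ compatibly with the Bockstein maps, not just on the $p$-torsion modules --- is exactly what Lemma~\ref{lemma:different-lifts} supplies, and the paper invokes it there.
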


\begin{proof}[Proof of Theorem~\ref{thm:top-alg}]
By Remark~\ref{rem:union}, condition (1) is equivalent to the vanishing of the Bockstein homomorphism
\[
\label{eqn:ext}
\Ext^k_R(R/\fraka,R/pR)\to\Ext^{k+1}_R(R/\fraka,R/pR)\,.
\]
Set $\frakm=(p,x_1,\ldots,x_n)$. Using Proposition~\ref{proposition:duality} for the Gorenstein local ring $R_\frakm$, this is equivalent to the vanishing of the Bockstein homomorphism
\[
H^{n-k-1}_\frakm(R/(\fraka+pR))\to H^{n-k}_\frakm(R/(\fraka+pR))\,,
\]
which, by Lemma~\ref{lemma:different-lifts}, is equivalent to the vanishing of the Bockstein
\[
H^{n-k-1}_\frakn(R/(\fraka+pR))\to H^{n-k}_\frakn(R/(\fraka+pR))\,,
\]
where $\frakn=(x_1,\ldots,x_n)$. Proposition~\ref{prop:Hochster} now completes the proof.
\end{proof}

\begin{example}
\label{example:rp2b}
Let $\Delta$ be the triangulation of the real projective plane $\RR\PP^2$ from Example~\ref{example:rp2a}, and $\fraka$ the corresponding Stanley-Reisner ideal. Let $p$ be a prime integer. We claim that the Bockstein homomorphism
\begin{equation}
\label{eqn:rp2bock}
H^3_\fraka(R/pR)\to H^4_\fraka(R/pR)
\end{equation}
is nonzero if and only if $p=2$.

For the case $p=2$, first note that the cohomology groups in question are
\begin{align*}
 \tilde{H}^0(\RR\PP^2;\ZZ)&=0\,,
&\tilde{H}^1(\RR\PP^2;\ZZ)&=0\,,
&\tilde{H}^2(\RR\PP^2;\ZZ)&=\ZZ/2\,,\\
 \tilde{H}^0(\RR\PP^2;\ZZ/2)&=0\,,
&\tilde{H}^1(\RR\PP^2;\ZZ/2)&=\ZZ/2\,,
&\tilde{H}^2(\RR\PP^2;\ZZ/2)&=\ZZ/2\,,
\end{align*}
so $0\to\ZZ\overset{2}\to\ZZ\to\ZZ/2\to0$ induces the exact sequence
\begin{small}
\[
\minCDarrowwidth15pt
\CD
0@>>>\tilde{H}^1(\RR\PP^2;\ZZ/2)@>\delta>>\tilde{H}^2(\RR\PP^2;\ZZ)@>2>>\tilde{H}^2(\RR\PP^2;\ZZ)@>\pi>>\tilde{H}^2(\RR\PP^2;\ZZ/2)@>>>0\\
@. @| @| @| @| @.\\
0@>>>\ZZ/2@>\delta>>\ZZ/2@>2>>\ZZ/2@>\pi>>\ZZ/2@>>>0
\endCD
\]
\end{small}

Since $\delta$ and $\pi$ are isomorphisms, so is the Bockstein homomorphism $\tilde{H}^1(\RR\PP^2;\ZZ/2)\to\tilde{H}^2(\RR\PP^2;\ZZ/2)$. By Corollary~\ref{cor:top-alg}, the Bockstein~\eqref{eqn:rp2bock} is nonzero in the case $p=2$.

If $p$ is an odd prime, then $R/(\fraka+pR)$ is Cohen-Macaulay: this may be obtained from Proposition~\ref{prop:Hochster}, or see \cite[page~180]{Hochster}. Hence
\[
H^k_\frakm(R/(\fraka+pR))=0\qquad\text{ for each $k\neq3$ and $p>2$}\,.
\]
By \cite[Theorem~1.1]{Lyubeznik:Compositio} it follows that
\[
H^{6-k}_\fraka(R/pR)=0\qquad\text{ for each $k\neq 3$ and $p>2$}\,,
\]
so the Bockstein homomorphism~\eqref{eqn:rp2bock} must be zero for $p$ an odd prime.

We mention that the arithmetic rank of the ideal $\fraka R/pR$ in $R/pR$ is $4$, independent of the prime characteristic $p$; see \cite[Example~2]{Yan}.
\end{example}

\begin{example}
\label{example:dunce}
Let $\Lambda_m$ be the \emph{$m$-fold dunce cap}, i.e., the quotient of the unit disk obtained by identifying each point on the boundary circle with its translates under rotation by $2\pi/m$; specifically, for each $\theta$, the points
\[
e^{i(\theta+2\pi r/m)}\qquad\text{ for }r=0,\dots,m-1\,,
\]
are identified. The $2$-fold dunce cap $\Lambda_2$ is homeomorphic to the real projective plane from Examples~\ref{example:rp2a}~and~\ref{example:rp2b}.

The complex $0\to\ZZ\overset{m}\to\ZZ\to0$, supported in homological degrees~$1,2$, computes the reduced simplicial homology of $\Lambda_m$. Let $\ell\ge2$ be an integer; $\ell$ need not be prime. The reduced simplicial cohomology groups of $\Lambda_m$ with coefficients in $\ZZ$ and $\ZZ/\ell$ are
\begin{align*}
 \tilde{H}^0(\Lambda_m;\ZZ)&=0\,,
&\tilde{H}^1(\Lambda_m;\ZZ)&=0\,,
&\tilde{H}^2(\Lambda_m;\ZZ)&=\ZZ/m\,,\\
 \tilde{H}^0(\Lambda_m;\ZZ/\ell)&=0\,,
&\tilde{H}^1(\Lambda_m;\ZZ/\ell)&=\ZZ/g\,,
&\tilde{H}^2(\Lambda_m;\ZZ/\ell)&=\ZZ/g\,,
\end{align*}
where $g=\gcd(\ell,m)$. Consequently, $0\to\ZZ\overset{\ell}\to\ZZ\to\ZZ/\ell\to0$ induces the exact sequence
\[
\minCDarrowwidth14pt
\CD
0@>>>\tilde{H}^1(\Lambda_m;\ZZ/\ell)@>\delta>>\tilde{H}^2(\Lambda_m;\ZZ)@>\ell>>\tilde{H}^2(\Lambda_m;\ZZ)@>\pi>>\tilde{H}^2(\Lambda_m;\ZZ/\ell)@>>>0\\
@. @| @| @| @| @.\\
0@>>>\ZZ/g@>\delta>>\ZZ/m@>\ell>>\ZZ/m@>\pi>>\ZZ/g@>>>0
\endCD
\]
The image of $\delta$ is the cyclic subgroup of $\ZZ/m$ generated by the image of $m/g$. Consequently the Bockstein homomorphism
\[
\tilde{H}^1(\Lambda_m;\ZZ/\ell)\to\tilde{H}^2(\Lambda_m;\ZZ/\ell)
\]
is nonzero if and only if $g$ does not divide $m/g$, equivalently, $g^2$ does not divide $m$.

Suppose $m$ is the product of distinct primes $p_1,\dots,p_r$. By the above discussion, the Bockstein homomorphisms $\tilde{H}^1(\Lambda_m;\ZZ/p_i)\to\tilde{H}^2(\Lambda_m;\ZZ/p_i)$ are nonzero. Let $\Delta$ be a simplicial complex corresponding to a triangulation of $\Lambda_m$, and let $\fraka$ in $R=\ZZ[x_1,\dots,x_n]$ be the corresponding Stanley-Reisner ideal. Corollary~\ref{cor:top-alg} implies that the Bockstein homomorphism
\[
H^{n-3}_\fraka(R/p_iR)\to H^{n-2}_\fraka(R/p_iR)
\]
is nonzero for each $p_i$. It follows that the local cohomology module $H^{n-2}_\fraka(R)$ has a $p_i$ torsion element for each $i=1,\dots,r$.
\end{example}

\begin{example}
We record an example where the Bockstein homomorphism $H^k_\fraka(R/pR)\to H^{k+1}_\fraka(R/pR)$ is zero though $H^{k+1}_\fraka(R)$ has $p$-torsion; this torsion is detected by ``higher'' Bockstein homomorphisms 
\[
H^k_\fraka(R/p^eR)\to H^{k+1}_\fraka(R/p^eR)\,,
\]
i.e., those induced by $0\to\ZZ\overset{p^e}\to\ZZ\to\ZZ/p^e\to0$.

Consider the $4$-fold dunce cap $\Lambda_4$. It follows from Example~\ref{example:dunce} that the Bockstein homomorphism
\[
\ZZ/2=\tilde{H}^1(\Lambda_4;\ZZ/2)\to\tilde{H}^2(\Lambda_4;\ZZ/2)=\ZZ/2
\]
is zero, whereas the Bockstein homomorphism
\[
\ZZ/4=\tilde{H}^1(\Lambda_4;\ZZ/4)\to\tilde{H}^2(\Lambda_4;\ZZ/4)=\ZZ/4
\]
is nonzero. Let $\fraka$ in $R=\ZZ[x_1,\dots,x_9]$ be the Stanley-Reisner ideal corresponding to the triangulation of $\Lambda_4$ depicted in Figure~\ref{fig:dunce}. While we have restricted to $p$-Bockstein homomorphisms, corresponding results may be derived for $p^e$-Bockstein homomorphisms; it then follows that the Bockstein homomorphism $H^6_\fraka(R/2R)\to H^7_\fraka(R/2R)$ is zero, whereas the Bockstein $H^6_\fraka(R/4R)\to H^7_\fraka(R/4R)$ is nonzero.

\begin{figure}[htbp]
\[
\psfrag{1}{$1$}
\psfrag{2}{$2$}
\psfrag{3}{$3$}
\psfrag{4}{$4$}
\psfrag{5}{$5$}
\psfrag{6}{$6$}
\psfrag{7}{$7$}
\psfrag{8}{$8$}
\psfrag{9}{$9$}
\includegraphics[width=6cm]{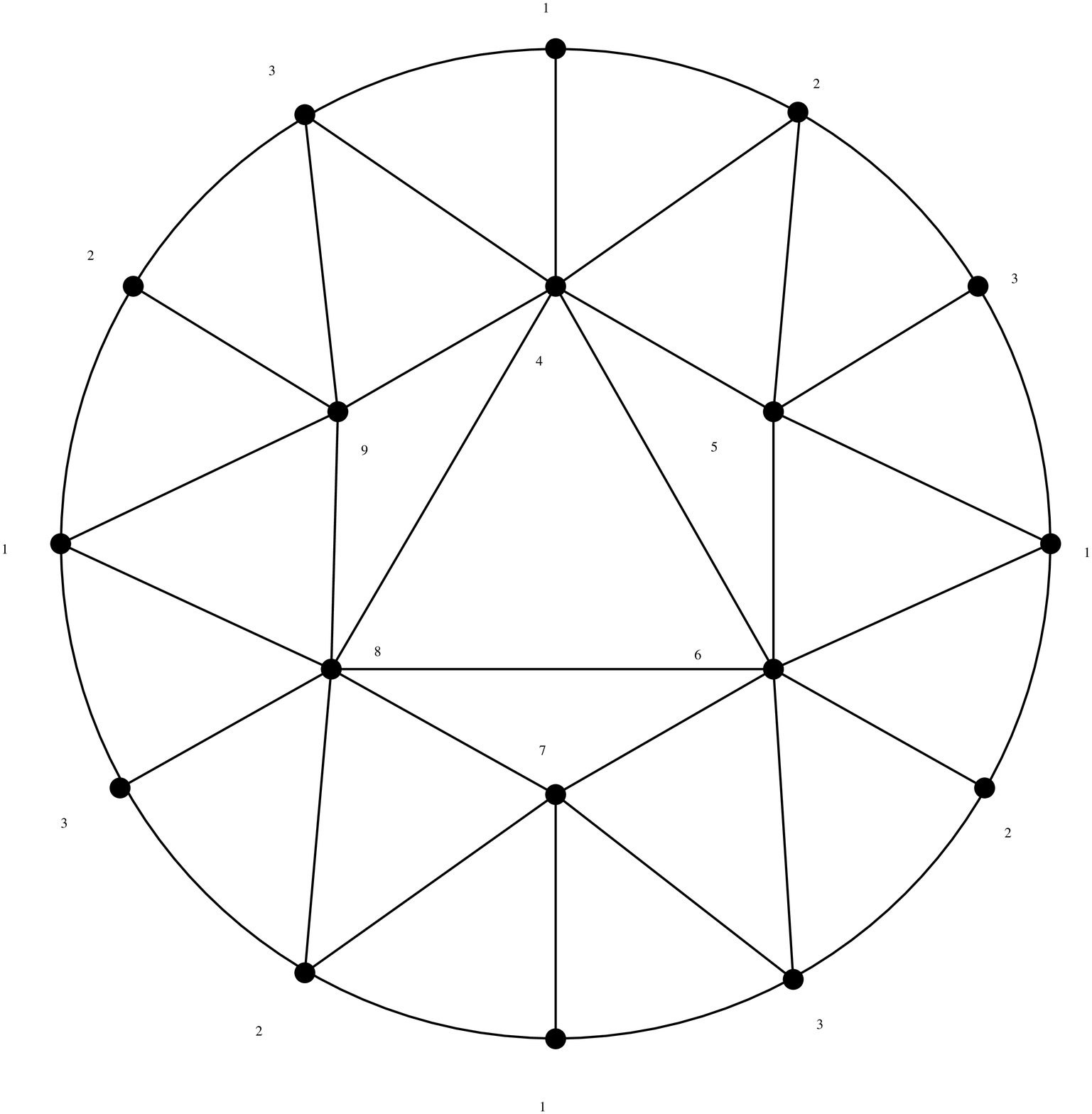}
\]
\caption{A triangulation of the $4$-fold dunce cap}
\label{fig:dunce}
\end{figure}
\end{example}

Given finitely many prime integers $p_1,\dots,p_r$, Example~\ref{example:dunce}, provides a polynomial ring $R=\ZZ[x_1,\dots,x_n]$ with monomial ideal $\fraka\subseteq R$ such that, for some $k$, the Bockstein homomorphism
\[
H^{k-1}_\fraka(R/p_iR)\to H^k_\fraka(R/p_iR)
\]
is nonzero for each $p_i$, in particular, $H^k_\fraka(R)$ has nonzero $p_i$-torsion elements. The following theorem shows that for $\fraka$ a monomial ideal, each $H^k_\fraka(R)$ has nonzero $p$-torsion elements for at most finitely many primes $p$.

\begin{theorem}
\label{thm:monomial-finite}
Let $R=\ZZ[x_1,\dots,x_n]$ be a polynomial ring, and $\fraka$ an ideal that is generated by monomials. Then each local cohomology module $H^k_\fraka(R)$ has at most finitely many associated prime ideals. In particular, $H^k_\fraka(R)$ has nonzero $p$-torsion elements for at most finitely many prime integers $p$.
\end{theorem}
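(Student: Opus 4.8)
The plan is to play two facts off against each other: the $\ZZ^n$-grading drastically limits which primes can be associated to $H^k_\fraka(R)$, and the monomial structure forces all but finitely many prime integers to act as nonzerodivisors on $H^k_\fraka(R)$.

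First, since local cohomology depends only on the radical of the ideal, and the radical of a monomial ideal is generated by squarefree monomials, I may assume $\fraka$ is generated by squarefree monomials. Equip $R$ with its $\ZZ^n$-grading, $\deg x_i$ being the $i$-th unit vector; then $\fraka$ is $\ZZ^n$-graded, and $H^k_\fraka(R)$ is a $\ZZ^n$-graded $R$-module, since it is the cohomology of the $\ZZ^n$-graded \v Cech complex on monomial generators of $\fraka$. Because associated primes of $\ZZ^n$-graded modules over a Noetherian $\ZZ^n$-graded ring are themselves $\ZZ^n$-graded (which is standard), every $\mathfrak p\in\Ass_R H^k_\fraka(R)$ is a $\ZZ^n$-graded prime of $R$. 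Inspecting homogeneous elements shows the $\ZZ^n$-graded primes of $R$ are precisely the ideals
\[
\mathfrak p_{\sigma,q}\ =\ qR+(x_i\mid i\in\sigma)\,,\qquad \sigma\subseteq\{1,\dots,n\}\,,\quad q=0\ \text{ or }\ q\ \text{a prime integer}\,.
\]
Only $2^n$ of these have $q=0$, so it suffices to bound the set of prime integers $q$ for which some $\mathfrak p_{\sigma,q}$ is associated. Now if $\mathfrak p_{\sigma,q}=\operatorname{ann}_R(\eta)$ with $0\neq\eta\in H^k_\fraka(R)$ and $q$ a prime integer, then $q\eta=0$, so $q$ is a zerodivisor on $H^k_\fraka(R)$; hence it is enough to show that only finitely many prime integers are zerodivisors on $H^k_\fraka(R)$.

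For the latter I would invoke the presentation in Remark~\ref{rem:union}: with $\phi(x_i)=x_i^t$ for a fixed $t\ge2$ and $\Phi(-)=R^\phi\otimes_R(-)$, one has $H^k_\fraka(R)=\dlim_e\Phi^e\big(\Ext^k_R(R/\fraka,R)\big)$, the transition maps being injective. The module $E=\Ext^k_R(R/\fraka,R)$ is finitely generated, so only finitely many prime integers lie in $\Ass_R E$. Fix a prime integer $p$ not among these, so multiplication by $p$ on $E$ is injective. Since $\Phi$ is an additive exact functor, and hence carries multiplication by $p$ to multiplication by $p$, iterating shows $p$ is a nonzerodivisor on $\Phi^e(E)$ for every $e$; as direct limits along a system of injections are exact, $p$ is a nonzerodivisor on $H^k_\fraka(R)$, and so lies in no associated prime of it. Together with the preceding paragraph this proves $\Ass_R H^k_\fraka(R)$ is finite. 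The last assertion then follows at once: if $H^k_\fraka(R)$ had a nonzero $p$-torsion element, the nonzero submodule $\big(0:_{H^k_\fraka(R)}p\big)$ would have an associated prime, which must contain $p$; with only finitely many associated primes available, this happens for at most finitely many $p$.

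The step I expect to be the real obstacle is recognizing that one should \emph{not} attempt to control the torsion submodule $\big(0:_{H^k_\fraka(R)}p\big)$ head on. The tempting route --- identifying it, via the long exact sequence of $0\to R\xrightarrow{p}R\to R/pR\to0$, with a quotient of $H^{k-1}_\fraka(R/pR)$ (which has finitely many associated primes by Huneke--Sharp) --- does not work, because a quotient of a module with finitely many associated primes can acquire infinitely many. The $\ZZ^n$-grading circumvents this by pinning $\Ass_R H^k_\fraka(R)$ down to the explicit list $\{\mathfrak p_{\sigma,q}\}$ in advance, so that all that remains is the coarse question of which prime integers are zerodivisors, and that is governed by the finitely generated module $\Ext^k_R(R/\fraka,R)$ together with the exactness of $\Phi$. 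One could also sidestep Remark~\ref{rem:union} entirely: a direct computation with the $\ZZ^n$-graded \v Cech complex shows that only finitely many finitely generated abelian groups occur, up to isomorphism, as the graded components $[H^k_\fraka(R)]_\bsu$, and a prime integer $p$ can be a zerodivisor on $H^k_\fraka(R)$ only if it divides the order of the torsion subgroup of one of them.
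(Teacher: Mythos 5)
Your proposal is correct and follows essentially the same route as the paper: reduce to squarefree monomials, use the multigrading to confine $\Ass H^k_\fraka(R)$ to the explicit list of graded primes, and then control the prime integers via the presentation $H^k_\fraka(R)\cong\dlim_e\Phi^e\big(\Ext^k_R(R/\fraka,R)\big)$ with injective transition maps, using the finite generation of $\Ext^k_R(R/\fraka,R)$ and the exactness of $\Phi$. The only cosmetic difference is that you phrase the last step as ``$p$ remains a nonzerodivisor through the direct system'' while the paper phrases it as ``$M$ has $p$-torsion iff $\Phi(M)$ does''; these are equivalent.
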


\begin{proof}
Consider the $\NN^n$--grading on $R$ where $\deg x_i$ is the $i$-th unit vector. This induces an $\NN^n$-grading on $H^k_\fraka(R)$, and it follows that each associated prime of $H^k_\fraka(R)$ must be $\NN^n$-graded, hence of the form $(x_{i_1},\dots,x_{i_k})$ or $(p,x_{i_1},\dots,x_{i_k})$ for $p$ a prime integer. Thus, it suffices to prove that $H^k_\fraka(R)$ has nonzero $p$-torsion elements for at most finitely many primes $p$.

After replacing $\fraka$ by its radical, assume $\fraka$ is generated by square-free monomials. Fix an integer $t\ge2$ and, as in Remark~\ref{rem:union}, let $\phi$ be the endomorphism of $R$ with $\phi(x_i)=x_i^t$ for each~$i$. Then
\[
H^k_\fraka(R/pR)\cong\dlim_e\Phi^e\big(\Ext^k_R(R/\fraka,R)\big)\,,
\]
where the maps in the direct system are injective. It suffices to verify that $M$ has nonzero $p$-torsion if and only if $\Phi(M)$ has nonzero $p$-torsion; this is indeed the case since $\Phi$ is an exact functor.
\end{proof}


\end{document}